\newtheorem{definition}{Definition}[section]
\newtheorem{lemma}[definition]{Lemma}
\newtheorem{theorem}[definition]{Theorem}
\newtheorem{proposition}[definition]{Proposition}
\newtheorem{corollary}[definition]{Corollary}
\newtheorem{remark}[definition]{Remark}
\newtheorem{example}[definition]{Example}
\numberwithin{equation}{section}
\def\e{\varepsilon}
\def\dx{\,dx}
\def\wto{\rightharpoonup}
\begin{document} 

    \title[Stability in the homogenization of Hamilton-Jacobi equations]{A variational approach to the stability in the homogenization of some Hamilton-Jacobi equations}

    \author[A.~Braides]{Andrea Braides}
    \address[A.~Braides]{SISSA, via Bonomea 265, Trieste, Italy}
    \curraddr{Department of Mathematics, University of Rome Tor Vergata, Rome, Italy}
    \email{abraides@sissa.it}
    
     \author[G.~Dal Maso]{Gianni Dal Maso}
    \address[G.~Dal Maso]{SISSA, via Bonomea 265, Trieste, Italy}
    \email{dalmaso@sissa.it}
  
      \author[C.~Le Bris]{Claude Le Bris}
    \address[C.~Le Bris]{\'Ecole Nationale des Ponts et Chauss\'ees and INRIA, MATHERIALS project-team,
77455~Marne-La-Vall\'ee Cedex 2, France}
    \email{claude.le-bris@enpc.fr}
  

    \thanks{\textsc{Acknowledgements.} The first two authors are members of GNAMPA of INdAM.
 This article is based on work supported by the National Research Project PRIN 2022J4FYNJ  ``Variational methods for stationary and evolution problems with singularities and interfaces"
 funded by the Italian Ministry of University and Research. The first two authors thank INRIA for the kind hospitality in Paris.
 The third author thanks SISSA for its hospitality. We thank Yves Achdou for his remarks on a preliminary version of the manuscript.
}
    \keywords{Homogenization, stability, perturbation, $\Gamma$-convergence, Hamilton--Jacobi equations}
    \subjclass[2020]{49J45, 35F21, 35B40, 35B20, 35B35, 35B27}

   \begin{abstract}We investigate the stability with respect to homogenization of
classes of integrals arising in the control-theoretic interpretation of some 
Hamilton--Jacobi equations. The prototypical case is
the homogenization of energies with a Lagrangian consisting of the sum of a kinetic term and
a highly oscillatory potential $V =V_{\rm per}+ W$, where $V_{\rm per}$ is periodic and $W$ is a nonnegative perturbation thereof. We assume that $W$ has zero average in tubular domains oriented along a dense set of directions. Stability then holds true; that is, the resulting homogenized functional is identical to that for $W= 0$. We consider various extensions of this case.
As a consequence of our results, we obtain stability for the homogenization of some steady-state and time-dependent, first-order Hamilton--Jacobi equations with convex Hamiltonians and perturbed periodic potentials. Finally, we show with an example that, for negative $W$, stability may not hold. Our study revisits and, depending on the different assumptions, complements results obtained by P.-L. Lions and collaborators using PDE techniques.\end{abstract}

%
%
%
\maketitle

\section{Introduction}
The asymptotic behaviour of viscosity solutions $U_\e$ of Hamilton-Jacobi equations of the form 
$$
\begin{cases}\partial_t U_\e(x,t) + H_{\rm per}\big(\frac{x}\e, \nabla U_\e(x,t)\big)=0,\\
U_\e(x,0)= \Phi(x),
\end{cases}
$$
with $H_{\rm per}$ periodic in the first variable, has been first studied by Lions, Papanicolaou, and Varadhan \cite{LPV}, who proved that such solutions converge uniformly as $\e\to 0$ to the solution $U$ of a {\em homogenized problem} of the form 
\begin{equation}\label{U-hom}
\begin{cases}\partial_t U(x,t) + H_{\rm hom}(\nabla U(x,t))=0,\\
U(x,0)= \Phi(x).
\end{cases}
\end{equation}
Similar statements hold for steady-state Hamilton-Jacobi equations (see e.g.~\cite{MR1159184}).

In this paper we consider a {\em stability issue} for the homogenization of Hamilton-Jacobi equations, addressing the following question: {\em what hypotheses on a perturbation $W$ ensure that viscosity solutions $\widetilde U_\e$ of equations of the form 
$$
\begin{cases}\partial_t \widetilde U_\e(x,t) + H_{\rm per}\big(\frac{x}\e, \nabla \widetilde U_\e(x,t)\big)- W\big(\frac{x}\e)=0,\\
\widetilde U_\e(x,0)= \Phi(x)
\end{cases}
$$
converge to the same $U$ solution of \eqref{U-hom}? }
Some answers to this question have been given by Achdou and Le Bris \cite{MR4643677}, who show that negative perturbations may lead to instability; that is, convergence to a different limit. In unpublished works by Lions and Souganidis, some conditions on positive $W$ are given ensuring stability (see the video presentation \cite{PLL-college}).
We note that both these results treat convex Hamiltonians, while {\em periodic} homogenization using the theory of viscosity solutions does not require such an assumption.

In the case of Hamiltonians $H_{\rm per}(x,\xi)$ convex and coercive in the variable $\xi$,  the stability question for Hamilton-Jacobi equations is related to a corresponding stability question for functionals in terms of the corresponding Lagrangian $L_{\rm per}$. Indeed, it is known that periodicity guarantees the $\Gamma$-convergence of the functionals
$$
F_\e(u)= \int_0^1 L_{\rm per} \Big(\frac{u(t)}\e, u'(t)\Big)dt
$$
to a homogenized functional
$$
F_{\rm hom}(u)= \int_0^1 L_{\rm hom} (u'(t))dt,
$$
whose homogenized Lagrangian is the one corresponding to the homogenized Hamiltonian $H_{\rm hom}$. This correspondence is ensured by the fact that the viscosity solutions $U_\e$ can be written in terms to the value function defined as a minimum for $F_\e$ through the Lax--Hopf formula. As a result, the convergence of $U_\e$ can be deduced using the Fundamental Theorem of  $\Gamma$-convergence on the convergence of minima.

The stability question for Hamilton--Jacobi equations can be then formulated as a stability question with respect to $\Gamma$-convergence: {\em what hypotheses on a perturbation $W$ ensure that the $\Gamma$-limit of 
\begin{equation}\label{giep}
G_\e(u)= \int_0^1\Big( L_{\rm per} \Big(\frac{u(t)}\e, u'(t)\Big)+ W\Big(\frac{u(t)}\e\Big)\Big)dt
\end{equation}
is still the functional $F_{\rm hom}$ (that is, the one given by the periodic case when $W=0$)?} Such a $\Gamma$-convergence question can be generalized and answered for general Lagrangians also depending on $t$, but such generalizations do not have an immediate connection with the Hamiltonian viewpoint. We note that in treating solutions of Hamilton--Jacobi equations we will use particular cases of results from the PDE literature, that apply to generic Hamiltonians 
and do not make use of the specific form assumed. 

When $W\ge 0$, the condition we find is an integral condition on $W$. In the case of bounded $W$, this can be stated as 
\begin{equation}\label{condition}
\lim_{R\to +\infty} \frac1R\int_{B_R\cap S^r_\xi}W(x)\,dx=0
\end{equation}
for all $\xi$ in a dense subset $\Xi$ of $\mathbb R^d\setminus \{0\}$ and $r>0$; that is, the average of $W$ is zero on stripes with a given direction in a dense set (Theorem \ref{main-d}). In the one-dimensional case $d=1$, the condition simplifies in
$$
\lim_{R\to +\infty} \frac1R\int_{-R}^R W(x)\,dx=0
$$
(Theorem \ref{main-1}).
We show with an example that the average condition can indeed be required to hold only for a countable set of directions and fail otherwise. Moreover, if $d>1$ we can also treat unbounded $W$ under some uniform local integrability condition. We note that the condition on $W$ is more general than those previously considered, but, as is common for $\Gamma$-convergence results, the information we obtain is weaker since we do not give a corrector result. Condition \eqref{condition} can be compared with 
 $$
\lim_{R\to +\infty} \frac1{R^d}\int_{B_R}W(x)\,dx=0
$$
considered by the authors for the stability of elliptic homogenization \cite{BDMLB}; that is, that the average of the perturbation on the whole space is $0$. Condition \eqref{condition} highlights that for Hamilton--Jacobi equations the perturbation needs to be small on one-dimensional like sets. 

\smallskip
We give a brief description of the arguments of the proof. Since $W\ge0$ the stability result for the $\Gamma$-limit reduces to the proof of an upper bound. The main observation is that it is sufficient to treat the case of piecewise-affine target functions with slopes in the dense set of directions $\Xi$, and that the construction of recovery sequences in the periodic case requires the use of a finite number of correctors. The sequences obtained using these correctors may lead to a large contribution of the additional term involving $W$, so cannot be used as recovery sequences for the perturbed energies, but, using the zero-average condition above, we may choose careful small variations of these correctors on which the contribution of $W$ is small, and use such modified correctors to construct recovery sequences. In the one-dimensional case such modifications are not possible, but we directly show that in this case the contribution of $W$ is small on the original recovery sequences. We note that  these arguments are completely different from those used for elliptic homogenization in  \cite{BDMLB}, that rely on localization techniques and higher-integrability results. 
 
\smallskip
In order to apply the result also to steady-state Hamilton--Jacobi equations, we additionally address the stability of integrals of the form
$$
\int_0^{+\infty} \Big(L_{\rm per} \Big(\frac{u(t)}\e, u'(t)\Big)+W \Big(\frac{u(t)}\e\Big)\Big)e^{-\lambda t}\,dt.
$$
Since results for such energies are not common in the literature, we prove a general $\Gamma$-convergence theorem relating $\Gamma$-convergence on finite intervals and on the half-line (Section \ref{stab-hl}).
The applications to the stability of Hamilton--Jacobi equations when $W$ is non-negative and satisfies \eqref{condition} are finally obtained as a product of the previous results in Section \ref{stab-HJ}, both in the steady-state and evolutionary cases.

In the stability results we use non-negative perturbations $W$. We note that the sign condition on $W$ cannot be dropped altogether. In the simplest case, when $L_{\rm per}(x,\xi)=L(\xi)=L_{\rm hom}(\xi)$ is independent of $x$ and $W\le 0$ and tends to $0$ at infinity, we show that the $\Gamma$-limit of $G_\e$ defined in \eqref{giep} is given by
$$
G_{\rm hom}(u)= \int_0^1 L_{\rm hom} (u'(t))dt+\inf W|\{t: u(t)=0\}|,
$$
which is strictly lower than $F_{\rm hom}(u)$ if $|\{t: u(t)=0\}|>0$ (Section \ref{nega}).

\medskip
For the sake of clarity in the presentation of the results and their proofs, we will treat a particular form of the Lagrangians (and of the Hamiltonians); namely, in the notation used above,
$$
L_{\rm per }(x,\xi)= |\xi|^2+ V_{\rm per} (x).
$$
This form will only make it simpler to use Fenchel transforms, and set our problems in Hilbert spaces.
All the results we obtain can be extended to more general Lagrangians $L_{\rm per }$ with $ L_{\rm per }(x,\cdot)$ convex and such that there exists $r>1$ and constants $c_1,c_2>0$ such that 
$$
c_1|\xi|^r\le L_{\rm per }(x,\xi)\le c_2(1+|\xi|^r) 
$$
(see Section \ref{sec:ext}).
Indeed, the only property that we need for the Lagrangians is the existence of suitable correctors, which depends only on a polynomial growth assumption of order $r>1$ \cite{BDF}. 


\bigskip\noindent{\bf Notation.}
We use standard notation for Sobolev spaces, in particular $H^1_0$ denotes the closure of $C^\infty_c$ in $H^1$ (and $W^{1,p}_0$ its closure in $W^{1,p}$, in some remarks). We use the notation $\mathcal H^{d-1}$ for the $(d-1)$-dimensional Hausdorff (surface) measure in $\mathbb R^d$.

For the notation of $\Gamma$-convergence we refer to \cite{DM,GCB}.
Due to the form of the energies we consider, we tacitly compute $\Gamma$-limits with respect to the weak topology of $H^1$, or equivalently with respect to the strong topology of $L^2$, unless otherwise stated. We say that a sequence $\Gamma$-converges preserving the boundary or initial conditions, respectively, if it $\Gamma$-converges and for every $u$ there exists a recovery sequence with the same boundary or initial values as $u$.

\section{Stability results in the one-dimensional case}
We separately treat the case when the function $u$ is scalar. In this case the conditions on $W$ are simpler, and the proof is easier by the order structure of $\mathbb R$. 

\bigskip
We begin by defining the {\em unperturbed energies} $F_\e$.
 Let $V_{\rm per} \colon\mathbb R\to\mathbb R$ be a continuous $1$-periodic function, and for $\e\in (0,1)$ define
$$
F_\e(u)=\int_0^1 \Big(|u'(t)|^2+V_{\rm per} \Big(\frac{u(t)}\e\Big)\Big)\,dt
$$
for $u\in H^1(0,1)$. The limit as $\e\to 0$ of such functionals is described in the following theorem.

\begin{theorem}
[Homogenization Theorem (\cite{BDF}, Proposition 15.9)]\label{per-thm}
The $\Gamma$-limit of $F_\e$ is the functional $F_{\rm hom}$ defined by
\begin{equation}
F_{\rm hom}(u)=\int_0^1 f_{\rm hom} (u'(t))\,dt
\end{equation}
for $u\in H^1(0,1)$,
where $f_{\rm hom}$ is the convex function characterized by 
$f_{\rm hom}(0)=\min V_{\rm per} $ and 
\begin{equation}\label{fomxi}
f_{\rm hom}(\xi)=\min\bigg\{ |\xi|\int_0^{1/|\xi|} \big( |v'(t)+\xi|^2+ V_{\rm per} (v(t)+\xi t)\big)dt : v\in H^1_0(0,1/|\xi|) \bigg\}
\end{equation}
if $\xi\neq0$.
\end{theorem}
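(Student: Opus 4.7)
The plan is to establish the two $\Gamma$-convergence inequalities by the standard scheme, exploiting the one-dimensional scalar setting which makes both the recovery construction and the localization argument substantially simpler than in higher dimensions. A preliminary step is equicoercivity: since $V_{\rm per}$ is continuous and periodic, hence bounded, any sequence with $F_\e(u_\e)\le C$ is bounded in $H^1(0,1)$, so $\Gamma$-convergence with respect to the weak $H^1$ topology coincides with $\Gamma$-convergence with respect to the strong $L^2$ topology.

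For the upper bound ($\Gamma$-$\limsup$), the core is an explicit recovery sequence for affine target functions. Given $u(t)=\xi t+c$ with $\xi\neq 0$, I would select a minimizer $v$ in the cell problem \eqref{fomxi} (existence follows from the direct method: the integrand is convex and coercive in $v'$ and $v\in H^1_0(0,1/|\xi|)$ forces weak compactness) and extend $v$ by $1/|\xi|$-periodicity to all of $\mathbb R$. The extension is well defined in $H^1_{\rm loc}$ because $v$ vanishes at the endpoints, and the map $s\mapsto V_{\rm per}(v(s)+\xi s)$ is $1/|\xi|$-periodic since $\xi\cdot (1/|\xi|)=\pm 1\in\mathbb Z$ is a period of $V_{\rm per}$. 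Setting
$$
u_\e(t)=\xi t+c+\e\, v(t/\e),
$$
the change of variables $s=t/\e$ together with the ergodic averaging of a periodic function gives $F_\e(u_\e)\to f_{\rm hom}(\xi)$, while $u_\e\to u$ uniformly. The case $\xi=0$ is handled by taking $u_\e$ equal to a constant minimizer of $V_{\rm per}$. A general $u\in H^1(0,1)$ is approximated in $H^1$-norm by piecewise affine functions $u^h$ with slopes $\xi_k$; on each piece I would run the affine recovery, adjust the additive constants so that the recovery sequences match at junctions (which is possible in one dimension since matching reduces to a single scalar equation per junction), and then diagonalize in $\e$ and $h$ using the continuity of $f_{\rm hom}$.

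For the lower bound ($\Gamma$-$\liminf$), I would use a localization/Riemann-sum argument. Partition $(0,1)$ into intervals $I_k$ of length $h$ and, for $u_\e\wto u$, on each $I_k$ compare $u_\e$ to the affine function with the same endpoint values as $u_\e$ on $I_k$. The key ingredient is the cell-formula inequality
$$
\int_{I_k}\bigg(|u_\e'|^2+V_{\rm per}\Big(\frac{u_\e}\e\Big)\bigg)\,dt\ge |I_k|\, f_{\rm hom}\bigg(\frac{u_\e(b_k)-u_\e(a_k)}{|I_k|}\bigg)-o(1),
$$
which follows from the minimum formula \eqref{fomxi} by rescaling the interval $I_k$ to unit length and using that, up to an $o(1)$ error coming from non-integer numbers of periods sweeping through $I_k$, any admissible $u_\e$ is a competitor in the cell problem. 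Summing over $k$, taking $\liminf_\e$ (using that the endpoint differences of $u_\e$ converge to those of $u$ by weak continuity in $H^1$), then letting $h\to 0$ and invoking lower semicontinuity and convexity of $f_{\rm hom}$ produces $F_{\rm hom}(u)$ as a Riemann-type limit.

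The main obstacle is this localized lower-bound inequality: reducing an arbitrary $u_\e$ on $I_k$ to the cell-problem minimization requires a careful handling of the mismatch between the boundary data of $u_\e$ and the zero boundary data in \eqref{fomxi}, typically via a De Giorgi-type cut-off that interpolates between $u_\e$ and a comparison function on a thin layer near the endpoints of $I_k$, while controlling the error in both the gradient term and the potential term. In the scalar one-dimensional setting this is essentially a one-point matching, which is why the argument works cleanly here but needs substantial reworking in higher dimensions (as the subsequent sections of the paper indicate).
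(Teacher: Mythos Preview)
The paper does not prove this theorem: it is quoted as Proposition~15.9 of \cite{BDF} and used as a black box, so there is no proof in the paper to compare your proposal against. Your outline is a reasonable sketch of the standard argument for scalar one-dimensional periodic homogenization (periodic correctors for the limsup, localization plus boundary matching for the liminf), and is in the spirit of the treatment in \cite{BDF}.

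One small gap worth flagging in your upper-bound step: for an affine target $u(t)=\xi t+c$ with $c\neq 0$, the sequence $u_\e(t)=\xi t+c+\e v(t/\e)$ gives $V_{\rm per}(u_\e/\e)=V_{\rm per}(v(t/\e)+\xi t/\e + c/\e)$, and while this is still $1/|\xi|$-periodic in $t/\e$, its period average depends on the fractional part of $c/\e$ and need not equal the cell value in \eqref{fomxi}. The standard fix---and the one the paper itself uses later when building recovery sequences in the proof of Theorem~\ref{main-1}---is to shift by $\e\lfloor c/\e\rfloor$ rather than $c$, which preserves the periodic structure exactly, and then correct the $O(\e)$ mismatch at the endpoints. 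With that adjustment your scheme goes through.
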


\begin{remark}\label{continuity}\rm
Note that $f_{\rm hom}$ satisfies the condition $|\xi|^2+\min V_{\rm per} \le f_{\rm hom}(\xi)\le |\xi|^2+\max V_{\rm per}$. By the convexity of $f_{\rm hom}$, this implies that $F_{\rm hom}$ is continuous in $H^1(0,1)$.
\end{remark} 

\begin{remark}[Periodic correctors]\rm Let $p_\xi\colon\mathbb R\to\mathbb R$ denote the $1/|\xi|$-periodic extension of a minimizer of \eqref{fomxi}, and let $w_\xi(t)= p_\xi(t)+\xi t$. Note that  $V_{\rm per}(w_\xi(t))$ is $1/|\xi|$-periodic since $V_{\rm per}(w_\xi(t+(1/|\xi|)))=V_{\rm per}(w_\xi(t)+{\rm sgn}\,\xi)= V_{\rm per}(w_\xi(t))$, and in the last equality we have used the fact that $V_{\rm per}$ is $1$-periodic.
The scaled functions $w_{\xi,\e}(t):= \e w_\xi(t/\e)= \e p_\xi(t/\e)+\xi t $ tend to $\xi t$ in $L^\infty(0,1)$ and also weakly in $H^1(0,1)$, while,  by the periodicity  and a change of variable in the integral, the functions $$t\mapsto |w_{\xi,\e}'(t)|^2+ V_{\rm per} \big(\frac{w_{\xi,\e}(t)}\e\big)= |p'_{\xi}(\tfrac{t}\e)+\xi|^2+  V_{\rm per} \big(w_{\xi}(\tfrac{t}\e)\big),$$ weakly$^*$ converge to the average $\int_0^1\big( |p'_{\xi}(t)+\xi|^2+  V_{\rm per} \big(p_{\xi}(t)+\xi t\big)\big)dt=
f_{\rm hom}(\xi)$ in $L^\infty(0,1)$. 
\end{remark}

\smallskip\goodbreak
The {\em perturbed energies} $G_\e$ will be defined as follows.
Given $W\colon\mathbb R\to\mathbb [0,+\infty)$ a Borel function we define
$$
G_\e(u)=\int_0^1 \Big(|u'(t)|^2+V_{\rm per} \Big(\frac{u(t)}\e\Big)+W\Big(\frac{u(t)}\e\Big)\Big)\,dt
$$
for $u\in H^1(0,1)$, which is well defined because any such $u$ are continuous. We can now state and prove the main result of this section.

\begin{theorem}[Stability Theorem]\label{main-1}
Let $W\colon\mathbb R\to\mathbb R$ be a Borel function such that 
\begin{equation}\label{2}
W\ge 0\quad \hbox{ and }\quad
\lim_{R\to +\infty} \frac1R\int_{-R}^R W (s)\,ds=0;
\end{equation}
then
\begin{equation}
\Gamma\hbox{-}\lim_{\e\to 0} G_\e =\Gamma\hbox{-}\lim_{\e\to 0} F_\e.
\end{equation}
\end{theorem}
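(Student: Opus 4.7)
The $\Gamma$-liminf inequality $\Gamma\hbox{-}\liminf_\e G_\e \geq F_{\rm hom}$ is immediate from $W \geq 0$ (so $G_\e \geq F_\e$) and Theorem~\ref{per-thm}. The substance of the proof is the reverse inequality $\Gamma\hbox{-}\limsup_\e G_\e \leq F_{\rm hom}$. By continuity of $F_{\rm hom}$ on $H^1(0,1)$ (Remark~\ref{continuity}) and a standard diagonal argument, it suffices to construct recovery sequences for piecewise affine targets; gluing along transition intervals of vanishing length reduces the task further to a single affine piece $u(t) = \xi t + c$ on a subinterval $[a, b]$.

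For $\xi \neq 0$, take the natural recovery $u_\e(t) = \e p_\xi(t/\e) + \xi t + c$ from Theorem~\ref{per-thm}, so that $F_\e(u_\e) \to (b-a)\,f_{\rm hom}(\xi)$. The problem reduces to showing
\begin{equation*}
\int_a^b W\bigl(u_\e(t)/\e\bigr)\,dt = \e \int_0^{(b-a)/\e} W\bigl(w_\xi(s) + c/\e\bigr)\,ds \longrightarrow 0,
\end{equation*}
where $w_\xi(s) = p_\xi(s) + \xi s$. Because $|w_\xi(s) - \xi s| = |p_\xi(s)|$ is uniformly bounded, the map $s \mapsto w_\xi(s) + c/\e$ sends $[0,(b-a)/\e]$ into an interval of length $|\xi|(b-a)/\e + O(1)$. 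Exploiting the monotone quasi-linear structure of $w_\xi$---its derivative $p_\xi' + \xi$ is bounded away from zero on each periodic cell---the change of variable $y = w_\xi(s) + c/\e$ bounds the right-hand side by $C\e \int_I W(y)\,dy$ for an interval $I \subset \mathbb R$ of length $O(1/\e)$. Hypothesis~\eqref{2} then gives $\int_I W = o(1/\e)$, yielding the required $o(1)$ bound.

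The degenerate case $\xi = 0$ (constant target $u \equiv c$) is treated separately with the candidate $u_\e \equiv \e(\theta_\e + k_\e)$, where $\theta_\e \to \bar y$ (a minimum point of $V_{\rm per}$ on $[0,1)$) and $\e k_\e \to c$; continuity of $V_{\rm per}$ gives $F_\e(u_\e) \to \min V_{\rm per} = f_{\rm hom}(0)$. For the $W$-term, averaging the hypothesis over integer translates yields
\begin{equation*}
\int_0^1 \min_{|k|\leq N} W(\theta + k)\,d\theta \leq \frac{1}{2N+1}\int_{-N}^{N+1} W(y)\,dy \longrightarrow 0,
\end{equation*}
so by monotone convergence $\inf_{k \in \mathbb Z} W(\theta + k) = 0$ for a.e.\ $\theta$; an appropriate choice of $\theta_\e \to \bar y$ and $k_\e$ in the admissible window $|k_\e - c/\e| = o(1/\e)$ then delivers $W(\theta_\e + k_\e) \to 0$. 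The main obstacle in the whole argument is the change-of-variable step for $\xi \neq 0$: it requires $w_\xi$ to be a bi-Lipschitz homeomorphism on each period, a regularity property classical for smooth $V_{\rm per}$ and reached in the continuous case by approximation.
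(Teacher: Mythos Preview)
Your overall strategy---reduce to affine targets by density and continuity of $F_{\rm hom}$, use a corrector-based recovery sequence, and control the $W$-term by a change of variable and the averaging hypothesis---is exactly the paper's. The difference lies in how the change of variable is justified, and that is where your argument has a genuine gap.

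You build the recovery sequence from the \emph{exact} periodic minimizer $p_\xi$ and then assert that $w_\xi'=p_\xi'+\xi$ is bounded away from zero on a period, so that $w_\xi$ is a bi-Lipschitz homeomorphism and the substitution $y=w_\xi(s)$ is legitimate. This is neither proved nor clearly true under the sole hypothesis that $V_{\rm per}$ is continuous: the Euler--Lagrange first integral $|w_\xi'|^2-V_{\rm per}(w_\xi)=E$ (which would give a positive lower bound on $|w_\xi'|$ when $E>-\min V_{\rm per}$) is available only for $V_{\rm per}\in C^1$, and even then one must argue that the minimizing energy level $E$ is strictly above $-\min V_{\rm per}$ and that $w_\xi'$ does not change sign. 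Your closing remark that the continuous case is ``reached by approximation'' is too vague: approximating $V_{\rm per}$ by smooth potentials would require a separate stability statement for $f_{\rm hom}$ and for the $\Gamma$-limits, which you do not supply.

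The paper sidesteps all of this by approximating the \emph{corrector} rather than the potential: for each $\delta>0$ it takes a piecewise-affine $p_\xi^\delta\in H^1_0(0,1/|\xi|)$ within $\delta$ of optimality in \eqref{fomxi} and with $(p_\xi^\delta)'+\xi\neq 0$ a.e.\ (such functions are dense in $H^1_0$). On each affine piece the map $s\mapsto p_\xi^\delta(s)+\xi s$ is linear with nonzero slope, so the change of variable is trivial, and the number of sign changes per period is a fixed finite $N$; this gives the bound $\int_0^R W(p_\xi^\delta+\xi s)\,ds\le C(\xi,\delta)\int_{-R}^R W$, after which \eqref{2} applies and one lets $\delta\to 0$. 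This piecewise-affine approximation of the corrector is the simple device you are missing.

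Your separate treatment of $\xi=0$ is both unnecessary and incomplete. The paper avoids it entirely by noting that piecewise-affine functions with $u'\neq 0$ a.e.\ are dense in $H^1(0,1)$; the case $\xi=0$ then follows from lower semicontinuity of the $\Gamma$-limsup and Remark~\ref{continuity}. Your argument deduces $\inf_{k\in\mathbb Z}W(\theta+k)=0$ a.e.\ from \eqref{2}, but you then need $k_\e$ in the window $|k_\e-c/\e|=o(1/\e)$ with $W(\theta_\e+k_\e)\to 0$; the a.e.\ statement does not control where along $\mathbb Z$ the small values of $W$ occur, so this last step does not follow as written.
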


\begin{proof} Let $G'':=\Gamma\hbox{-}\limsup_{\e\to 0} G_\e$, and let $F_{\rm hom}$ be given by Theorem \ref{per-thm}.  Since $W \ge 0$ it suffices to prove that $G''\le F_{\rm hom}$.
We start by proving this inequality at the function $u(t)=\xi t$, with $\xi\neq 0$.  Thanks to the 
continuity of $F_1$ with respect to the strong convergence in $H^1(0,1)$, we can choose
a  piecewise-affine $\frac1{|\xi|}$-periodic function ${p^\delta_\xi}$ that minimizes the problem in \eqref{fomxi} up to a small error $\delta>0$; that is, ${p^\delta_\xi}(0)={p^\delta_\xi}(1/|\xi|)=0$ and
\begin{equation}\label{pexi}
 |\xi|\int_0^{1/|\xi|} \big( |(p^\delta_\xi)'(t)+\xi|^2+ V_{\rm per} ({p^\delta_\xi}(t)+\xi t)\big)dt\le f_{\rm hom}(\xi)+\delta.
\end{equation}
We additionally may assume that $(p^\delta_\xi)'+\xi \neq 0$ almost everywhere since piecewise-affine functions satisfying this condition are strongly dense in $H^1(0,1)$. We set $u_{\e,\delta}(t) =\e {p^\delta_\xi}(t/\e) +\xi t$. We can then estimate 
\begin{eqnarray*}
\limsup_{\e\to 0} \int_0^1 W \Big(\frac{u_{\e,\delta}(t)}\e\Big)dt
&=&\limsup_{\e\to 0} \int_0^1 W \Big({p^\delta_\xi}\Big(\frac{t}\e\Big) +\xi \frac{t}\e\Big)dt\\
&=&\limsup_{\e\to 0} \e \int_0^{1/\e} W \big({p^\delta_\xi}(s) +\xi s\big)ds\\
&=&\limsup_{R\to +\infty} \frac1R \int_0^{R} W \big({p^\delta_\xi}(s) +\xi s\big)ds.
\end{eqnarray*}
If $(a,b)$ is an interval where ${p^\delta_\xi}$ is affine, by the change of variable $x= {p^\delta_\xi}(s) +\xi s$ we obtain
$$
\int_a^b  W\big({p^\delta_\xi}(s) +\xi s\big)ds= \frac1{(p^\delta_\xi)'+\xi}\int_{{p^\delta_\xi}(a) +\xi a}^{{p^\delta_\xi}(b) +\xi b} W(x)\dx.
$$
Note that if $s\mapsto {p^\delta_\xi}(s) +\xi s$ is monotone, then we can estimate
$$
\int_0^{\frac{n}{|\xi|}}W\big({p^\delta_\xi}(s) +\xi s\big)ds\le \max\Big\{\frac1{|(p^\delta_\xi)'+\xi|}\Big\}
\int_{-n}^n W(x)\dx.
$$
By the periodicity of ${p^\delta_\xi}$ we then obtain that
\begin{equation}\label{wexi}
\limsup_{R\to +\infty} \frac1R \int_0^{R} W\big({p^\delta_\xi}(s) +\xi s\big)ds
\le C\limsup_{R\to +\infty} \frac1R \int_{-R}^{R} W(x)\dx=0,
\end{equation}
with $C=C(\xi,\delta)=\frac{1}{|\xi|}\max\big\{\frac1{|(p^\delta_\xi)'+\xi|}\big\}$.
In the general case, this inequality holds with $C$ replaced by $CN$, 
where $N$ is the number of changes of sign of the derivative of $s\mapsto {p^\delta_\xi}(s) +\xi s$ in a period. 

Since $u_{\e,\delta}$ tends to $u(t)=\xi t$ weakly in $H^1(0,1)$ as $\e\to 0$ since the average  of $(p^\delta_\xi)'$ vanishes by periodicity, and $t\mapsto |u_{\e,\delta}'(t)|^2+ V_{\rm per} \big(\frac{u_{\e,\delta}(t)}\e\big)$ weakly$^*$ converges to the constant 
$$
 |\xi|\int_0^{1/|\xi|} \big( |(p^\delta_\xi)'(t)+\xi|^2+ V_{\rm per} ({p^\delta_\xi}(t)+\xi t)\big)dt 
 $$
in $L^\infty(0,1)$ by periodicity of $p^\delta_\xi$ and a change of variable in the integral, 
first by  \eqref{wexi} we  have
$$
\limsup_{\e\to 0} G_\e(u_{\e,\delta})=\limsup_{\e\to 0} F_\e(u_{\e,\delta}).
$$
Next, successively using \eqref{pexi} and \eqref{fomxi}, we bound the right-hand side from above
$$
\limsup_{\e\to 0} F_\e(u_{\e,\delta})\le f_{\rm hom}(\xi)+ \delta=F_{\rm hom}(u)+\delta,
$$
while, given that $u_{\e,\delta}$ tends to $u$ as $\e\to 0$ the left-hand side is bounded from below as
$$
G''(u)\le \limsup_{\e\to 0} G_\e(u_{\e,\delta}).
$$
Finally, letting $\delta\to 0$ this yields the desired inequality $G''\le F_{\rm hom}$ for $u$.

To deal with the case $u(t)= \xi t+q$ with $\xi\neq 0$ and $q\in\mathbb R$, we slightly modify the previous construction. Indeed, with fixed $\e>0$, we let $t^\e_0=\min\{t\in [0,1]: u(t)\in\e\mathbb Z\}$
and $t^1_\e= t^0_\e +k_\e\frac\e{|\xi|}$, where $k_\e$ is the largest integer such that $t^0_\e +k_\e\frac\e{|\xi|}\le 1$. We then define
$$
u_{\e,\delta}(t)=\begin{cases} \xi t +q & \hbox{ if } 0\le t\le t^0_\e
\\
\e {p^\delta_\xi}(\frac{t-t^0_\e}\e) +\xi t +q & \hbox{ if } t^0_\e\le t\le t^1_\e
\\
\xi t +q & \hbox{ if }  t^1_\e\le t\le 1.
\end{cases}
$$
Since $t^0_\e\to 0$ and $t^1_\e\to 1$ as $\e\to 0$ the same computation as above proves that $G''(u)\le F_{\rm hom} (u)$.

Noting that in the previous computation the recovery sequence attains the same values as $u$ at the endpoints of the interval $[0,1]$, we can exhibit a recovery sequence for each piecewise-affine target function $u$ such that $u'\neq 0$ almost everywhere by repeating the construction above in each interval where $u$ is affine. This leads to the inequality $G''(u)\le F_{\rm hom} (u)$ for each such functions.

Finally, by Remark \ref{continuity}, the density of piecewise-affine functions $u$ such that $u'\neq 0$ almost everywhere, and the lower-semicontinuity of $G''$, we obtain  $G''(u)\le F_{\rm hom} (u)$ for every function $u\in H^1(0,1)$.
\end{proof}

\section{Stability results in the higher-dimensional case}\label{High-case-sect}

Let $d>1$, let $V_{\rm per} \colon\mathbb R^d\to \mathbb R$ be a continuous $1$-periodic function, and for $\e\in (0,1)$  define the {\em unperturbed energies} 
$$
F_\e(u)=\int_0^1 \Big(|u'(t)|^2+V_{\rm per} \Big(\frac{u(t)}\e\Big)\Big)\,dt
$$
for $u\in H^1((0,1);\mathbb R^d)$.

The following result is proven in \cite[Theorem 15.3]{BDF}. 

\begin{theorem}[Homogenization Theorem]\label{hom-th}
The $\Gamma$-limit of $F_\e$ is the functional $F_{\rm hom}$ defined by
\begin{equation}
F_{\rm hom}(u)=\int_0^1 f_{\rm hom} (u'(t))\,dt
\end{equation}
for $u\in  H^1((0,1);\mathbb R^d)$,
where 
\begin{equation}\label{fomxi-d}
f_{\rm hom}(\xi)=\lim_{T\to+\infty}\frac1T\min\bigg\{\int_0^{T} \big( |v'(t)+\xi|^2+ V_{\rm per} (v(t)+ t\xi )\big)dt : v\in H^1_0((0,T);\mathbb R^d) \bigg\}.
\end{equation}
\end{theorem}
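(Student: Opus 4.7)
The plan is the classical three-step homogenization argument adapted to the asymptotic cell formula. First, I would establish that the limit in \eqref{fomxi-d} exists. Writing $m_T(\xi)$ for the minimum in \eqref{fomxi-d}, approximate subadditivity $m_{T_1+T_2}(\xi)\le m_{T_1}(\xi)+m_{T_2}(\xi)+C(\xi)$ is obtained by concatenating near-optimal competitors on $[0,T_1]$ and on a shifted copy on $[T_1,T_1+T_2]$: both vanish at their endpoints, so the glued function lies in $H^1_0((0,T_1+T_2);\mathbb{R}^d)$, and the phase mismatch $T_1\xi\bmod\mathbb{Z}^d$ appearing in the potential term is absorbed into a uniformly bounded error thanks to $\|V_{\rm per}\|_\infty<+\infty$. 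Fekete's lemma then yields $f_{\rm hom}(\xi):=\lim_T m_T(\xi)/T$. Convexity of $f_{\rm hom}$ follows from an analogous interpolation between $\xi_1$ and $\xi_2$ pieces, and the bound $|\xi|^2+\min V_{\rm per}\le f_{\rm hom}(\xi)\le|\xi|^2+\max V_{\rm per}$ is immediate, whence $F_{\rm hom}$ is continuous on $H^1((0,1);\mathbb{R}^d)$.

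For the upper bound, by density of piecewise affine functions it suffices to construct a recovery sequence for $u(t)=\xi t+q$. Choose $T_\e\to+\infty$ with $\e T_\e\to 0$ and near-minimizers $v_\e\in H^1_0((0,T_\e);\mathbb{R}^d)$ of $m_{T_\e}(\xi)$ within additive error $o(T_\e)$. Partition $[0,1]$ into $\lfloor 1/(\e T_\e)\rfloor$ consecutive intervals of length $\e T_\e$ and set $u_\e(t)=\xi t+q+\e v_\e((t-k\e T_\e)/\e)$ on the $k$-th block, extended by $u$ on the tail. Since $v_\e$ vanishes at both endpoints, $u_\e\in H^1((0,1);\mathbb{R}^d)$ and $u_\e\rightharpoonup u$ weakly. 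Rescaling $s=(t-k\e T_\e)/\e$, each block contributes $\e\int_0^{T_\e}\bigl(|v_\e'+\xi|^2+V_{\rm per}(v_\e(s)+s\xi+\sigma_k)\bigr)\,ds$ with $\sigma_k:=(kT_\e\xi+q/\e)\bmod\mathbb{Z}^d$; summing over $k$ gives $(1+o(1))f_{\rm hom}(\xi)$ provided the shifts $\sigma_k$ are controlled, which one achieves by selecting $T_\e$ so that $T_\e\xi$ approaches $\mathbb{Z}^d$ along a suitable subsequence, or through an equidistribution argument exploiting uniform continuity of $V_{\rm per}$. Piecewise affine targets are handled by localizing this construction on each affine piece.

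For the lower bound, given $u_\e\rightharpoonup u$ in $H^1((0,1);\mathbb{R}^d)$, partition $[0,1]$ into $N$ equal subintervals $I_j$ and set $\xi_j=N\int_{I_j}u'$. Writing $u_\e(t)=\xi_j(t-(j-1)/N)+u((j-1)/N)+w_\e^j(t)$ on $I_j$ and rescaling $s=(t-(j-1)/N)/\e$, $v_\e^j(s):=w_\e^j((j-1)/N+\e s)/\e$, converts the localized energy into a cell-problem expression on $[0,1/(\e N)]$ at slope $\xi_j$; after modifying $v_\e^j$ near the endpoints to belong to $H^1_0$ at vanishing cost (uniformly in $\e$), this is bounded below by $(1/N)f_{\rm hom}(\xi_j)+o(1)$ by the definition of $f_{\rm hom}$ as an asymptotic limit. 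Summing over $j$ and sending $N\to+\infty$, the Riemann sums $\sum_j(1/N)f_{\rm hom}(\xi_j)$ converge to $\int_0^1 f_{\rm hom}(u')\,dt$ under quadratic growth.

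The principal obstacle throughout is the phase-shift issue: since $T\xi$ need not lie in $\mathbb{Z}^d$, the periodicity-based identifications available in the scalar case of Theorem \ref{per-thm} fail, which is precisely why $f_{\rm hom}$ must be defined as an asymptotic limit rather than a one-cell minimum. Controlling the accumulating shifts uniformly in the subadditivity step, in the gluing for the recovery sequence, and in the identification for the liminf forms the technical core of the argument.
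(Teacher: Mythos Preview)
The paper does not give its own proof of this theorem: it simply cites \cite[Theorem~15.3]{BDF}, and the only ingredient it spells out is the almost-corrector construction in Remark~\ref{ap-corr}, which is the upper-bound mechanism used in \cite{BDF}. Your three-step outline (existence of the asymptotic cell limit, limsup via block recovery sequences, liminf via localization and a joining argument) is the standard route and coincides in spirit with \cite{BDF}; in particular your recovery-sequence step, where you choose $T_\e$ so that $T_\e\xi$ is close to $\mathbb Z^d$, is exactly the almost-period device of Remark~\ref{ap-corr}.

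One point deserves sharpening. In the subadditivity step you write that the phase mismatch $T_1\xi\bmod\mathbb Z^d$ ``is absorbed into a uniformly bounded error thanks to $\|V_{\rm per}\|_\infty<+\infty$''. Taken literally this is false: if you concatenate $v_1\in H^1_0(0,T_1)$ with a time-translated $v_2\in H^1_0(0,T_2)$, the potential on the second block reads $V_{\rm per}(v_2(s)+s\xi+T_1\xi)$, and replacing this by $V_{\rm per}(v_2(s)+s\xi)$ costs $O(T_2)$, not $O(1)$. What actually works is to insert a transition segment of length~$1$ on which the competitor moves affinely by $-\{T_1\xi\}\in[0,1)^d$; periodicity of $V_{\rm per}$ then cancels the shift exactly on the second block, and the transition itself costs at most $d+|\xi|^2+\|V_{\rm per}\|_\infty$. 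This yields $m_{T_1+T_2+2}(\xi)\le m_{T_1}(\xi)+m_{T_2}(\xi)+C(\xi)$, from which Fekete's lemma applies. The same device handles the shift $u((j-1)/N)/\e$ that appears in your liminf step when you compare the localized energy with the cell minimum; without it, you are implicitly using that $m_T^\sigma(\xi)/T\to f_{\rm hom}(\xi)$ \emph{uniformly} in the base point $\sigma$, which is not free. You correctly flag the phase-shift issue as the technical core in your final paragraph, but the concrete mechanism for absorbing it at $O(1)$ cost should be stated, since boundedness of $V_{\rm per}$ alone is not the reason it works.
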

In particular, from \eqref{fomxi-d} it follows that $f_{\rm hom}(0)=\min V_{\rm per}$. 
Note that contrary to the scalar case, we cannot reduce to a periodic cell problem since the functions $ t\mapsto V(x+t\xi)$ are quasiperiodic but not periodic.

\begin{remark}[Almost-periodic piecewise-affine almost-correctors]\label{ap-corr}\rm Given $\xi\in\mathbb R^d$, formula \eqref{fomxi-d} and the periodicity of $V_{\rm per}$ ensure the existence of almost-correctors $p^\delta_\xi$, in a sense that will be made precise below. With fixed $\delta>0$ there exists $\eta=\eta_\delta>0$ such that $|V_{\rm per}(x+y)-V_{\rm per}(x)|<\delta$ for all $x\in\mathbb R^d$ and $|y|<\eta$. By the periodicity of $V_{\rm per}$ we then have that if $\tau>0$ is such that there exists $z\in\mathbb Z^d$ with $|\tau\xi-z|<\eta_\delta$ then 
\begin{equation}\label{ergo}
|V_{\rm per} (x+\tau\xi)-V_{\rm per} (x)|\le \delta \hbox{ for all }x\in\mathbb R^d.
\end{equation}
By well-known facts of ergodic theory on the torus, there exists $L_\delta>0$ such that every interval of length $L_\delta$ contains a $\tau$ satisfying \eqref{ergo}.

We fix 
\begin{equation}\label{tedd}
T\ge \frac{L_\delta+1}\delta
\end{equation} and a piecewise-affine function $p^\delta_\xi\in H^1_0((0,T);\mathbb R^d)$
such that
$$
\frac1T\int_0^{T} \big( |(p^\delta_\xi)'(t)+\xi|^2+ V_{\rm per} (p^\delta_\xi(t)+\xi t)\big)dt \le f_{\rm hom}(\xi)+\delta.
$$

By the ergodicity property recalled above, and since we may assume $L_\delta>1$, we can construct a sequence $T_i\in\mathbb R$ with 
\begin{equation}\label{tii}
T_0=0\quad\hbox{ and }\quad T_i+T+1\le T_{i+1}\le T_i+T+ L_\delta
\end{equation}
such that \eqref{ergo} holds for $\tau=T_i$, and extend $p^\delta_\xi$ by translation on  each $[T_i, T_i+T]$; that is $p^\delta_\xi(t)=p^\delta_\xi(t-T_i)$, and as $0$ on the remaining intervals. For use in the following proofs, we now introduce a more detailed notation for the almost-correctors. 
There exist a finite family $\xi_1,\ldots,\xi_N\in\mathbb R^d$ and a subdivision of $[0,T]$ by times $0=a_0<a_1<\ldots<a_N=T$, and such that 
$p^\delta_\xi$ is affine with gradient $\xi_j$ on $(a_{j-1},a_j)+T_i$, with $T_i$ as in \eqref{tii}. Furthermore, by continuity we may assume that we choose $p^\delta_\xi$ such that $\xi_j+\xi\neq 0$.

Note that the construction above is a particular case of the one in the proof of \cite[Theorem 15.3]{BDF} and follows from the quasi-periodicity of $t\mapsto V_{\rm per} (t\xi)$.
\end{remark}


As in the scalar case we define the {\em perturbed energies} $G_\e$.
Let $W\colon\mathbb R^d\to [0,+\infty)$ be a Borel function. We then set
$$
G_\e(u)=\int_0^1 \Big(|u'(t)|^2+V_{\rm per} \Big(\frac{u(t)}\e\Big)+W \Big(\frac{u(t)}\e\Big)\Big)\,dt
$$
for $u\in H^1((0,1);\mathbb R^d)$.

The hypotheses on $W$ will be more complex than in the one-dimensional case. To state them,  we introduce some notation. 
For every $x\in\mathbb R^d$ and $\rho>0$ let $B_\rho(x)$ denote the open ball with centre $x$ and radius $\rho$. If $x=0$ we omit it from the notation.
For every $\xi\in\mathbb R^d\setminus \{0\}$ and $r>0$ we define
\begin{equation}\label{esserxi}
S^r_\xi:=\{x\in\mathbb R^d: x=t\xi+z\hbox{ with } t\in\mathbb R\hbox{ and }z\in\mathbb R^d, \  |z|< r\}=\bigcup_{t\in\mathbb R}B_r(t\xi),
\end{equation}
the circular cylinder with axis in direction $\xi$ and radius $r$.

\begin{theorem}[Stability Theorem -- the higher-dimensional case]\label{main-d}
Let $W\colon\mathbb R^d\to [0,+\infty)$ be a Borel function, and assume that  
\begin{equation}\label{2-2}
\lim_{R\to +\infty} \frac1R\int_{B_R\cap S^r_\xi}W(x)\,dx=0
\end{equation}
for all $\xi$ in a dense subset $\Xi$ of $\mathbb R^d\setminus \{0\}$ and $r>0$. We also assume that
there exists $p>\frac{d}2$  such that $W\in L^p_{\rm unif}(\mathbb R^d)$; that is,
\begin{equation}\label{2-1} \sup_{y\in\mathbb R^d} \int_{B_1(y)}W^p(x)\dx<+\infty.
\end{equation}
Then
\begin{equation}
\Gamma\hbox{-}\lim_{\e\to 0} G_\e =\Gamma\hbox{-}\lim_{\e\to 0} F_\e.
\end{equation}
\end{theorem}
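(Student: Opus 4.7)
The argument follows the broad outline of Theorem \ref{main-1}. Since $W\ge 0$ gives $F_\e\le G_\e$, it suffices to establish the upper bound $G''(u)\le F_{\rm hom}(u)$. By weak-$H^1$ lower semicontinuity of $G''$, continuity of $F_{\rm hom}$ on $H^1((0,1);\mathbb R^d)$ (a vector analogue of Remark \ref{continuity}, following from the polynomial growth of $f_{\rm hom}$), and the strong $H^1$ density of piecewise-affine functions with slopes in the dense set $\Xi\cup\{0\}$, it is enough to prove the inequality for such piecewise-affine targets. A localization mirroring that of Theorem \ref{main-1}, aligning partition endpoints with null-intervals of the almost-corrector, further reduces the problem to $u(t)=\xi t$ for a fixed $\xi\in\Xi$.

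\smallskip
Fix such $\xi$, $\delta>0$, and the almost-corrector $p^\delta_\xi$ of Remark \ref{ap-corr}; set $M:=\|p^\delta_\xi\|_{L^\infty(\mathbb R)}$. The natural candidate $\e p^\delta_\xi(t/\e)+\xi t$ recovers the energy $F_\e$ but may carry a large $W$-contribution, since in dimension $d>1$ there is no change-of-variable reduction to a 1D integral as in Theorem \ref{main-1}. The trick is to perturb by a small translation $\e y$ and average over $y$ in a ball: for $y\in B_r$ (with $r>0$ to be chosen) set
$$
u_{\e,\delta,y}(t):=\e p^\delta_\xi(t/\e)+\xi t+\e y.
$$
Uniformly for $y\in B_r$, $u_{\e,\delta,y}\weak \xi t$ in $H^1$; the kinetic part is $y$-independent; and using the almost-periodicity \eqref{ergo}--\eqref{tii}, the $1$-periodicity of $V_{\rm per}$, and the choice \eqref{tedd}, the averaged $F_\e$-contribution satisfies
$$
\limsup_{\e\to 0}\frac{1}{|B_r|}\int_{B_r}F_\e(u_{\e,\delta,y})\,dy\;\le\;f_{\rm hom}(\xi)+C\delta.
$$

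\smallskip
For the $W$-contribution, Fubini and the substitution $s=t/\e$ give
$$
\frac{1}{|B_r|}\int_{B_r}\!\int_0^1 W\Big(\frac{u_{\e,\delta,y}(t)}{\e}\Big)dt\,dy=\frac{\e}{|B_r|}\int_{\mathbb R^d}W(z)\,m_\e(z)\,dz,
$$
where $m_\e(z):=\bigl|\{s\in[0,1/\e]:\ z\in B_r(p^\delta_\xi(s)+\xi s)\}\bigr|$. Because $|p^\delta_\xi|\le M$, the support of $m_\e$ sits in $S^{M+r}_\xi\cap B_{R_\e}$ with $R_\e=O(1/\e)$; because $s\mapsto p^\delta_\xi(s)+\xi s$ is piecewise-affine with finitely many slopes and nonzero speed (Remark \ref{ap-corr}), $m_\e\le \kappa(\xi,\delta,r)$. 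Therefore
$$
\frac{1}{|B_r|}\int_{B_r}\!\int_0^1 W\Big(\frac{u_{\e,\delta,y}(t)}{\e}\Big)dt\,dy\;\le\;\frac{\kappa\,\e R_\e}{|B_r|}\cdot\frac{1}{R_\e}\int_{B_{R_\e}\cap S^{M+r}_\xi}\!\!\!W(z)\,dz\;\longrightarrow\;0
$$
by \eqref{2-2} applied to $\xi\in\Xi$ with radius $M+r$. Picking $y_\e\in B_r$ with $G_\e(u_{\e,\delta,y_\e})$ below the average furnishes a recovery sequence for $\xi t$, and sending $\delta\to 0$ concludes this case.

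\smallskip
The main obstacle is the quantitative accounting that makes the averaged $F_\e$-contribution converge to $f_{\rm hom}(\xi)$ up to $O(\delta)$ uniformly for $y\in B_r$: this requires combining the ergodic choice of the $T_i$ in \eqref{tii}, the $1$-periodicity of $V_{\rm per}$, and the smallness of the gap intervals guaranteed by \eqref{tedd}. The uniform local integrability \eqref{2-1} is expected to enter precisely at these gaps, where $p^\delta_\xi=0$ and the trajectory is the rigid line $\xi s$: a H\"older estimate on unit balls using $W\in L^p_{\rm unif}$ with $p>d/2$ provides a uniformly bounded contribution per gap, and the total gap length $O(\delta/\e)$ multiplied by $\e$ yields a vanishing $O(\delta)$ correction. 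The extensions from $\xi t$ to $\xi t+q$ and from there to general piecewise-affine targets with slopes in $\Xi$ then proceed by stitching along the almost-corrector's null-intervals exactly as in Theorem \ref{main-1}.
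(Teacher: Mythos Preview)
Your averaging-over-translations idea for the core case $u(t)=\xi t$ is a genuinely different route from the paper's, and for that case it is simpler. The paper works segment-by-segment on the almost-corrector: on each affine piece it shifts the segment by a small vector $z_{ij}$ orthogonal to the slope (chosen by a mean-value argument so that the line integral of $W$ is dominated by a cylinder average), and then reconnects the shifted segments to the original nodes by the curved paths of Lemma~\ref{lemma5}, whose $W$-cost is controlled precisely via the $L^p_{\rm unif}$ hypothesis \eqref{2-1}. Your device replaces all of this by a single global translation $\e y$ averaged over $y\in B_r$; the Fubini computation and the bound $m_\e\le\kappa$ are correct, and if $r$ is chosen no larger than the $\delta$-modulus of continuity of $V_{\rm per}$ (this is the choice you left implicit), the $F_\e$-part is within $O(\delta)$ of the $y=0$ case for \emph{every} $y\in B_r$, not just on average. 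So for $u(t)=\xi t$ your argument goes through---and without invoking \eqref{2-1} at all at this stage.

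The gap is in the last paragraph. First, your account of where \eqref{2-1} enters is off: in your scheme the null-intervals of the almost-corrector carry no special difficulty, since the global averaging already controls the $W$-integral over the whole of $[0,1/\e]$. In the paper \eqref{2-1} is used exclusively through Lemma~\ref{lemma5}, to build short $H^1$ connectors between nearby points with controlled kinetic and $W$-cost; that is how boundary mismatches are repaired. Second, and more seriously, the extension to $u(t)=\xi t+q$ and to piecewise-affine targets does \emph{not} proceed ``exactly as in Theorem~\ref{main-1}'': the one-dimensional trick there selects $t_0^\e$ with $u(t_0^\e)\in\e\mathbb Z$, which for $d>1$ generically never occurs along a line. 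The paper's remedy is to translate the $q=0$ recovery sequence by $\e\lfloor q/\e\rfloor$ (leaving $V_{\rm per}$ unchanged by periodicity) and then to patch the endpoints via Lemma~\ref{lemma5}; the piecewise-affine case is then treated interval-by-interval with matched boundary values. Your proposal neither invokes Lemma~\ref{lemma5} nor offers a substitute mechanism to absorb the $W$-cost of such connectors when $W$ is unbounded, so the stitching step is incomplete as written.
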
 

The proof of the theorem will be obtained after some preliminary lemmas. Before them, we comment on the hypotheses on $W$, which we assume non-negative.
First, we note that \eqref{2-1} is satisfied if $W$ is bounded, while \eqref{2-2} is implied by the uniform convergence of $W$ to $0$ at infinity, or by
\begin{equation}\label{2+}
\lim_{R\to +\infty} \frac1R\int_{B_R}W(x)\,dx=0,
\end{equation}
which in turn is implied by $W\in L^q(\mathbb R^d)$ with $1\le q<\frac{d}{d-1}$, using H\"older's inequality.

The following example shows that \eqref{2-2} may be satisfied even if 
\begin{equation}\label{WROM}
\lim_{R\to+\infty} W(Rx)=1
\end{equation}
for almost every $x\in \mathbb R^d$, which implies, if $W$ is bounded,
$$
\langle W\rangle:=\lim_{R\to+\infty} \frac1{R^d}\int_{B_R}W(x)\,dx= |B_1|,
$$
using the Dominated Convergence Theorem.

\begin{example}\rm Let $d=2$ and let $W\colon\mathbb R^2\to [0,+\infty)$ be such that 
$W(x)=0$ if $x\in A$ and $W(x)=1$ otherwise, where $A$ is constructed as follows.

For $k\in\{2,3,\ldots\}$, we define the subsets of $[0,2\pi]$
$$
D_k:=\big\{\theta^k_h: h \hbox{ odd, } 1\le h\le 2^k\big\},\hbox{ where }  \theta^k_h=\frac{2\pi}{2^k} h,
$$
and let $D=\bigcup_{k=0}^\infty D_k$. Note that if $\theta^k_h\in D_k$ then there exists $\theta^k_{h^*}\in D_k$ such that $|\theta^k_h-\theta^k_{h^*}|=\pi$.

For each $ \theta^k_h\in D_k$ we consider a region $A^k_h$ delimited by a suitable parabola with vertex in $2^k(\cos  \theta^k_h,\sin  \theta^k_h)$ and axis given by the half line $\{ \rho (\cos  \theta^k_h,\sin  \theta^k_h): \rho\ge 2^k\}$. This parabola is constructed so that 
\begin{equation}\label{frt}
A^k_h\subset \{\rho (\cos\theta,\sin\theta): \rho\ge 2^k, |\theta-\theta^k_h|\le 4^{-k}\}.
\end{equation}
The set $A$ is defined as
$$
A=\bigcup\big\{ A^k_h: k\ge 0, h \hbox{ odd, } 1\le h\le 2^k\big\}.
$$
Given $\xi$ in the dense set $\{\rho(\cos\theta,\sin\theta): \theta\in D, \rho>0\}$, there exists $\rho>0$ and $\theta^k_h$ such that $\xi=\rho(\cos\theta^k_h,\sin\theta^k_h)$. For every $r>0$, since the regions $A^k_h$ and $A^k_{h^*}$ have the same axis as $S^r_\xi$, there exists $R_0=R_0(r,\xi)$ such that $S^r_\xi\setminus B_{R_0}\subset A^k_h\cup  A^k_{h^*}$. Hence, 
$$
 \frac1R\int_{B_R\cap S^r_\xi}W(x)\,dx\le   \frac1R |B_{R_0}|, 
$$
and condition \eqref{2-2} is satisfied. Note that also \eqref{2-1} is satisfied.

Let now 
$$
\widehat D:=\bigcap_{m=2}^\infty \bigcup_{k=m}^\infty(D_k+[-4^{-k},4^{-k}])
$$
Since $|D_k+[-4^{-k},4^{-k}]|\le 2^{-k}$ we deduce that $|\widehat D|=0$, and hence that
the set $N:=\{\rho(\cos\theta,\sin\theta): \rho\ge0, \theta\in \widehat D\}$ is negligible in $\mathbb R^2$.

We claim that if $x\in \mathbb R^2\setminus N$ then there exists $R_0=R_0(x)>0$ such that
$Rx\not\in A$ for every $R\ge R_0$. Indeed, writing $x=\rho(\cos\theta,\sin\theta)$ with $\rho>0$ and $\theta\in(0,2\pi]$, we have $\theta\not\in \widehat D$. Hence there exists $m\in\mathbb N$ such that $\theta\not\in D_k+[-4^{-k},4^{-k}]$ for all $k\ge m$. We now prove that if $R\rho\ge 2^m$ then for every $k\ge m$ we have
\begin{equation}\label{poi}
Rx\not\in\bigcup\big\{ A^k_h:  h \hbox{ odd, } 1\le h\le 2^k\big\}.
\end{equation}
Indeed, if $R\rho<2^k$ then \eqref{poi} is due to the inequality $|y|\ge 2^k$ for every $y\in A^k_h$ by the first condition in \eqref{frt}. If instead $2^k\le R\rho$, the condition on $\theta$ implies that for every $\theta^k_h\in D_k$ we have $|\theta-\theta^k_h|>4^{-k}$, and hence the second condition in \eqref{frt} ensures that $Rx\not\in A^k_h$, concluding the proof of \eqref{poi}.

On the other hand, since the axes of the parabolas defining $A^k_h$ are different from the straight line passing through the origin and $x$, there exists $R_0=R_0(x)\ge 2^m$ such that 
$$
Rx\not\in A^k_h \hbox{ for all }R\ge R_0 \hbox{ and for all }k\in\{2,\ldots,m-1\}.
$$
Together with \eqref{poi} this proves the claim.\hfill$\diamondsuit$
\end{example}

We now turn to the proof of Theorem \ref{main-d} with some preliminary lemmas.

\begin{lemma}\label{lemma1}
Let $W$ satisfy \eqref{2-1}. Then for all $\alpha>0$ such that $1<\alpha d <p$ and for all $r>0$ we have
\begin{equation}
\int_{S^{d-1}}\Big(\int_0^{r^{1/\alpha}}W(t^\alpha \theta)\,dt\Big)d\mathcal H^{d-1}(\theta)\le
r^\beta  C_{d,\alpha,p}\Big(\int_{B_r}W^p(x)\dx\Big)^{1/p},
\end{equation}
where  $\beta=\frac{p-\alpha d}{\alpha p}>0$ and $C_{d,\alpha,p}:=\Big(\frac{p-1}{p-\alpha d}\mathcal H^{d-1}(S^{d-1})\Big)^{1-\frac1p}$,
where $\mathcal H^{d-1}$ denotes the $(d-1)$-dimensional (surface) Hausdorff measure, and $S^{d-1}$ is the boundary of the unit ball in $\mathbb R^d$.
\end{lemma}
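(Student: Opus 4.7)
The plan is to collapse the iterated integral on the left into a weighted Lebesgue integral of $W$ on $B_r$, and then apply H\"older's inequality against the resulting radial weight. The hypothesis $1<\alpha d<p$ is used in exactly two places: $\alpha d<p$ makes the radial weight locally integrable at the origin (so H\"older applies), and the form of the exponent produces the claimed power $r^\beta$.

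First, for each fixed $\theta\in S^{d-1}$ I would perform the one-dimensional change of variable $s=t^\alpha$ on the inner integral, which gives
$$\int_0^{r^{1/\alpha}}W(t^\alpha\theta)\,dt=\frac{1}{\alpha}\int_0^r W(s\theta)\,s^{1/\alpha-1}\,ds.$$
Integrating this identity in $\theta$ against $d\mathcal{H}^{d-1}$ and using the polar formula $dx=s^{d-1}\,ds\,d\mathcal{H}^{d-1}(\theta)$ on $\mathbb{R}^d$, the left-hand side of the lemma rewrites as $\frac{1}{\alpha}\int_{B_r}W(x)\,|x|^{1/\alpha-d}\,dx$. The interchange of integrations is justified by Tonelli since $W\ge0$.

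Second, I would apply H\"older's inequality with conjugate exponents $p$ and $p'=p/(p-1)$ to this single integral:
$$\frac{1}{\alpha}\int_{B_r}W(x)\,|x|^{1/\alpha-d}\,dx\le\frac{1}{\alpha}\Big(\int_{B_r}W^p(x)\,dx\Big)^{1/p}\Big(\int_{B_r}|x|^{(1/\alpha-d)p'}\,dx\Big)^{1/p'}.$$
The radial exponent satisfies $(1/\alpha-d)p'+d=(p-\alpha d)/(\alpha(p-1))>0$ precisely because $\alpha d<p$, so that a polar-coordinate computation gives
$$\int_{B_r}|x|^{(1/\alpha-d)p'}\,dx=\mathcal{H}^{d-1}(S^{d-1})\,\frac{\alpha(p-1)}{p-\alpha d}\,r^{(p-\alpha d)/(\alpha(p-1))}.$$

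Third, I would raise to the power $1/p'=(p-1)/p$ and collect terms. The $r$-exponents telescope via $(p-\alpha d)/(\alpha(p-1))\cdot(p-1)/p=\beta$, and the remaining $\alpha$, $p-1$, $p-\alpha d$, and $\mathcal{H}^{d-1}(S^{d-1})$ regroup into the asserted $C_{d,\alpha,p}$. I do not foresee any real obstacle: the whole argument is a change of variable followed by one H\"older estimate and an explicit computation of a radial integral, and the hypothesis $1<\alpha d<p$ precisely ensures that the weight integral converges and yields the correct positive power of $r$.
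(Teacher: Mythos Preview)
Your approach is essentially the paper's: a change of variable to polar coordinates followed by H\"older against the resulting radial weight, only in the reverse order (the paper applies H\"older first with the weight $t^{\gamma}$, $\gamma=(\alpha d-1)/p$, and then changes variable $\rho=t^\alpha$ to recognise $\int_{B_r}W^p$). One small point: when you collect constants you actually obtain $\alpha^{-1/p}C_{d,\alpha,p}$ rather than $C_{d,\alpha,p}$, since the $1/\alpha$ from the substitution does not fully cancel against the $\alpha$ in the weight integral; the paper's own proof drops this same factor of $1/\alpha$ in the change of variable, so the discrepancy is in the stated constant of the lemma itself and is irrelevant for the applications.
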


\begin{proof} Let $\gamma=\frac{\alpha d-1}p$ and $\frac1p+\frac1q=1$. Using H\"older's inequality we obtain
\begin{eqnarray*}
&&\int_{S^{d-1}}\Big(\int_0^{r^{1/\alpha}}W(t^\alpha \theta)\,dt\Big)d\mathcal H^{d-1}(\theta)\\
&\le&\Big(\int_{S^{d-1}}\Big(\int_0^{r ^{1/\alpha}}W^p(t^\alpha \theta) t^{\gamma p}\,dt\Big)d\mathcal H^{d-1}(\theta)\Big)^{1/p}\Big(\int_{S^{d-1}}\Big(\int_0^{r ^{1/\alpha}}t^{-\gamma q}\,dt\Big)d\mathcal H^{d-1}(\theta)\Big)^{1/q}.
\end{eqnarray*}
By the change of variable $\rho= t^\alpha$  we have
\begin{eqnarray*}
\int_0^{r ^{1/\alpha}}W^p(t^\alpha \theta) t^{\gamma p}\,dt=
\int_0^{r ^{1/\alpha}}W^p(t^\alpha \theta) t^{\alpha d-1}\,dt=\int_0^{r }W^p(\rho \theta) \rho^{ d-1}\,d\rho,
\end{eqnarray*}
while
\begin{eqnarray*}
\int_0^{r ^{1/\alpha}}t^{-\gamma q}\,dt= \int_0^{r ^{1/\alpha}}t^{-\frac{\alpha d -1}{p-1}}\,dt
= \tfrac{p-1}{p-\alpha d }\, r ^{ \frac{p-\alpha d }{\alpha(p-1)}}. 
\end{eqnarray*}
Inserting these equalities in the inequality obtained above, we prove the claim.
\end{proof}

\begin{lemma}\label{lemma5}
Let $W$ satisfy \eqref{2-1} with $p>\frac{d}2$, and let $\frac12<\alpha<\frac{p}d$. Let $x_0,y_0\in \mathbb R^d$, with $x_0\neq y_0$, and let $r=|y_0-x_0|$. Then there exists a  trajectory  $\gamma\in H^1((-r^{1/\alpha},r^{1/\alpha});\mathbb R^d)$ such that
\begin{eqnarray}\label{g-1}
&&\gamma(-r^{1/\alpha})= x_0,\quad \gamma(r^{1/\alpha})= y_0,\qquad\qquad\\
\label{g-2}&&\int_{-r^{1/\alpha}}^{r^{1/\alpha}}W(\gamma(t))dt\le  r^{\frac{p-\alpha d}{\alpha p}} K_{d,\alpha,p}\Big(\int_{B_{2r}(\frac{x_0+y_0}2)}W^p(x)\dx\Big)^{1/p},\\
\label{g-3}
&&\int_{-r^{1/\alpha}}^{r^{1/\alpha}}|\gamma'(t)|^2dt
\le  \tfrac{2\alpha^{2}}{2\alpha-1}r^{\frac{2\alpha-1}\alpha},
\end{eqnarray} 
where $K_{d,\alpha,p}$ is a constant depending only on $d$, $\alpha$ and $p$.
\end{lemma}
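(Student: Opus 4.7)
The plan is to construct a $d$-parameter family of admissible trajectories $\gamma_u$ with the right endpoints and kinetic energy, and then select $u$ so that the $W$-integral is small via an averaging argument. Writing $c = \frac{x_0+y_0}{2}$, $\hat v = (y_0-x_0)/r$, and $T = r^{1/\alpha}$, I would set, for $u$ in the closed ball $\overline{B_{\sqrt{3}/2}} \subset \mathbb R^d$,
$$\gamma_u(t) = \begin{cases} x_0 + (t+T)^\alpha \bigl(\tfrac12\hat v + u\bigr), & t \in [-T, 0],\\ y_0 + (T-t)^\alpha \bigl(-\tfrac12\hat v + u\bigr), & t \in [0, T].\end{cases}$$
The two pieces meet continuously at $c + ru$, the endpoints \eqref{g-1} hold automatically, and $\gamma_u \in H^1$ since $\alpha > 1/2$. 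A direct computation using the parallelogram identity gives
$$\int_{-T}^T|\gamma_u'|^2\,dt = \frac{\alpha^2 T^{2\alpha-1}}{2\alpha-1}\bigl(|\tfrac12\hat v + u|^2 + |\tfrac12\hat v - u|^2\bigr) = \frac{\alpha^2 T^{2\alpha-1}}{2\alpha-1}\bigl(\tfrac12 + 2|u|^2\bigr),$$
which is bounded by $\tfrac{2\alpha^2}{2\alpha-1}r^{(2\alpha-1)/\alpha}$ precisely when $|u|\le\sqrt{3}/2$; this delivers \eqref{g-3} uniformly in $u$. The cusp exponent $\alpha$ is dictated by the sharp constant in \eqref{g-3}, while the common offset $u$ contributes the $d$-dimensional freedom to be exploited in the next step.

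For \eqref{g-2}, the aim is to show
$$\int_{\overline{B_{\sqrt{3}/2}}}\int_{-T}^T W(\gamma_u(t))\,dt\,du \le C_{d,\alpha,p}\,r^\beta\,\|W\|_{L^p(B_{2r}(c))},\qquad \beta=\tfrac{p-\alpha d}{\alpha p},$$
via Fubini together with a dilation change of variable. Splitting the time-integral on $[-T,0]$ and $[0,T]$ and substituting $s=t+T$ and $s=T-t$, each half contributes an expression of the shape $\int_0^T\int_{\overline{B_{\sqrt{3}/2}}}W\bigl(z_0(s) + s^\alpha u\bigr)du\,ds$ with $z_0(s)\in\{x_0 + \tfrac12 s^\alpha\hat v,\ y_0 - \tfrac12 s^\alpha\hat v\}$. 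The key manoeuvre is the dilation $v = s^\alpha u$ (Jacobian $s^{-\alpha d}$), which transforms the inner ball into the spatial ball $B_{\sqrt{3}s^\alpha/2}(z_0(s))$ in $\mathbb R^d$; since $|z_0(s)-c| = (r-s^\alpha)/2 \le r/2$, these balls all lie in $B_{\sqrt 3 r/2}(c) \subset B_{2r}(c)$. H\"older's inequality with exponents $p,p'$ bounds the spatial integral by $|B_1|^{1-1/p}(\sqrt{3}s^\alpha/2)^{d(1-1/p)}\|W\|_{L^p(B_{2r}(c))}$, and after cancellation with $s^{-\alpha d}$ the surviving radial integrand $s^{-\alpha d/p}$ is integrable on $[0,T]$ thanks to $\alpha d < p$, producing $\int_0^T s^{-\alpha d/p}\,ds = r^\beta/(1-\alpha d/p)$. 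Dividing the resulting bound by $|B_{\sqrt 3/2}|$ and invoking the mean-value principle produces a $u$ for which $\gamma := \gamma_u$ satisfies \eqref{g-2} with a constant $K_{d,\alpha,p}$ depending only on $d,\alpha,p$.

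The principal obstacle is choosing the right family of trajectories. A single cuspidal piece $t^\alpha\theta$ with $|\theta|=1$ achieves the sharp energy constant $\tfrac{2\alpha^2}{2\alpha-1}$ but is rigidly determined by its endpoints, leaving no freedom for an averaging argument; generic two-piece variants in which both $\theta_1,\theta_2$ are required to be unit vectors either overshoot the energy constant or restrict the parameter to a $(d-2)$-dimensional submanifold of $S^{d-1}$ on which a direct $L^p$ bound for $W$ is not available. The symmetric dual-cusp construction above resolves this tension: the two cusps jointly saturate the energy bound at $|u|=\sqrt 3/2$, while the common translation $u$ supplies exactly the $d$-dimensional degree of freedom whose dilation converts the parameter average into an ambient spatial integral fitting inside $B_{2r}(c)$.
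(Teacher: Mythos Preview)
Your argument is correct, but the route is genuinely different from the paper's. The paper normalizes so that $x_0=\tfrac{r}{2}e_1$, $y_0=-\tfrac{r}{2}e_1$, and builds the two cusp pieces along \emph{unit} directions $\theta\in\Theta=\{\theta\in S^{d-1}:\theta_1<-\tfrac12\}$ and their reflections $\widehat\theta$; the averaging is therefore over the $(d-1)$-dimensional cap $\Theta$, and the $L^p$ bound comes from the preliminary Lemma~\ref{lemma1}, which is essentially a polar-coordinate H\"older estimate. A subsequent reparametrization by $t(\theta)=-\tfrac{1}{2\theta_1}\in[\tfrac12,1]$ forces the two pieces to meet on the mid-hyperplane and yields the energy bound since $t(\theta)\le 1$. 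By contrast, you average over a full $d$-dimensional ball of (non-unit) offsets $u$, and the dilation $v=s^\alpha u$ converts the parameter integral directly into a spatial integral, bypassing Lemma~\ref{lemma1} and the reparametrization entirely. Your construction is more self-contained and the change of variables is cleaner; the paper's has the virtue of keeping the direction vectors on the sphere and making the reflection geometry explicit. One small correction to your closing discussion: the two-piece unit-vector construction does \emph{not} force the parameter down to a $(d-2)$-dimensional set---the paper runs exactly such a construction over a $(d-1)$-dimensional cap, with the radial variable supplying the missing dimension via polar coordinates. This does not affect the validity of your proof.
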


\begin{proof} By a translation and rotation argument it is not restrictive to suppose that the middle point  $x_0+y_0$ of the segment between $x_0$ and $y_0$ is $0$, and $x_0=\frac{r}2 e_1$ and $y_0= -\frac{r}2e_1$.  Let $H=\{x\in\mathbb R^d:x_1=0\}$ 
be the symmetry hyperplane, let $C$ be the open ball in $H$ defined by
$$
C=H\cap B_r\big(\tfrac{r}2 e_1\big)=H\cap B_r\big(-\tfrac{r}2 e_1\big).
$$
and let $\Theta:= \Big\{ \theta\in S^{d-1}: \theta_1<-\frac12\Big\}$.
Note that $x\in H$ belongs to $C$ if and only if $\frac{x-\frac{r}2 e_1}{|x-\frac{r}2 e_1|}\in \Theta$. 

For $\theta=(\theta_1,\theta_2,\dots, \theta_d)$ we use the notation $\widehat\theta=(-\theta_1,\theta_2,\dots, \theta_d)$. By Lemma \ref{lemma1}, we have
\begin{equation}
\int_{\Theta}\Big(\int^0_{-r^{1/\alpha}}W\Big(\frac{r}2 e_1+(r^{1/\alpha}+t)^\alpha \theta\Big)\,dt\Big)d\mathcal H^{d-1}(\theta)\le
r^\beta  C_{d,\alpha,p}\Big(\int_{B_r(\frac{r}2 e_1)}W^p(x)\dx\Big)^{1/p},
\end{equation}
\begin{equation}
\int_{\Theta}\Big(\int_0^{r^{1/\alpha}}W\Big(-\frac{r}2 e_1+(r^{1/\alpha}-t)^\alpha\widehat\theta\Big)\,dt\Big)d\mathcal H^{d-1}(\theta)\le
r^\beta  C_{d,\alpha,p}\Big(\int_{B_r(-\frac{r}2 e_1)}W^p(x)\dx\Big)^{1/p}.
\end{equation}

Then, by the Mean Value Theorem, there exists $\theta\in\Theta$ such that 
\begin{eqnarray*}
&&\int^0_{-r^{1/\alpha}}W\Big(\frac{r}2 e_1+(r^{1/\alpha}+t)^\alpha \theta\Big)\,dt+\int_0^{r^{1/\alpha}}W\Big(-\frac{r}2 e_1+(r^{1/\alpha}-t)^\alpha\widehat\theta\Big)\,dt\\
&\le&
r^\beta 2 \frac{C_{d,\alpha,p}}{\mathcal H^{d-1}(\Theta)}\Big(\int_{B_{2r}(0)}W^p(x)\dx\Big)^{1/p}.
\end{eqnarray*}

If $\theta\in \Theta$ then we can define $t(\theta)= -\frac{1}{2\theta_1}\in [\frac12,1]$ so that $\frac{r}2 e_1+r \,t(\theta)\theta=-\frac{r}2 e_1+r \,t(\theta)\widehat \theta\in C$. We reparametrize the functions in the integrals above so that $\gamma$ defined by
\begin{equation}
\gamma(t)=\begin{cases} \frac{r}2 e_1+ (t+r^{1/\alpha} )^\alpha t(\theta)\theta & \hbox{ if $t\in [-r^{1/\alpha},0]$}\cr
-\frac{r}2 e_1+(r^{1/\alpha} -t)^\alpha t(\theta)\widehat\theta & \hbox{ if $t\in [0,r^{1/\alpha}]$}
\end{cases}
\end{equation}
is continuous in $0$ and satisfies $\gamma(-r^{1/\alpha})= \frac{r}2 e_1=x_0$ and $\gamma(r^{1/\alpha})= -\frac{r}2 e_1=y_0$.
Since $\alpha>\frac12$ we have $\gamma\in H^1((-r^{1/\alpha},r^{1/\alpha});\mathbb R^d)$.

By the estimate above and a linear change of variables, we get 
$$
\int_{-r^{1/\alpha}}^{r^{1/\alpha}}W(\gamma(t))dt\le K_{d,\alpha,p}r^\frac{p-\alpha d}{\alpha p} \Big(\int_{B_{2r}(0)}W^p(x)\dx\Big)^{1/p},
$$
with $K_{d,\alpha,p}= 2^{1+\frac1\alpha} \frac{C_{d,\alpha,p}}{\mathcal H^{d-1}(\Theta)}$. 
Finally,
$$
\int_{-r^{1/\alpha}}^{r^{1/\alpha}}|\gamma'(t)|^2dt= 2t(\theta)^2\int_0^{r^{1/\alpha}}\alpha^2 t^{2(\alpha-1)}dt\le \frac{2\alpha^2}{2\alpha-1}r^{(2\alpha-1)/\alpha},
$$
so that the claim follows.
\end{proof}


%
\begin{proof}[Proof of Theorem {\rm\ref{main-d}}]
We preliminarily note that many steps of the construction in the following proof simplify if $d=1$, and the proof reduces to that of the previous section with the role of correctors played by almost-correctors.

Since $W\ge 0$ we only have to prove that 
\begin{equation}
\Gamma\hbox{-}\limsup_{\e\to 0} G_\e \le \Gamma\hbox{-}\lim_{\e\to 0} F_\e.
\end{equation}
We first prove this inequality for $u(t)=t\xi  +q$ for $\xi$ belonging to the dense set $\Xi$ where \eqref{2-2} holds. To this end, for every $\lambda>0$ we will construct a sequence $u_\e\to u$ such that 
\begin{equation}\label{gls}
\limsup_{\e\to 0} G_\e(u_\e) \le (1+\lambda)\Gamma\hbox{-}\lim_{\e\to 0} F_\e(u) +\lambda.
\end{equation}
Moreover, we will take care of maintaining the boundary data; that is, $u_\e(0)=u(0)$ and $u_\e(1)= u(1)$. This will enable us to adapt this construction to the case of a piecewise-affine target function $u$.

We first assume $q=0$ and we construct the sequence $u_\e$ from the almost-correctors $p^\delta_\xi$, and the corresponding subdivisions depending on the $T_i$ and $a_j$, introduced in Remark \ref{ap-corr}.  
We define 
$$w^\delta_\xi(t):=p^\delta_\xi(t) +t\xi,
$$
and note that
\begin{eqnarray*}
&&\hskip-1cm\int_{T_i}^{T_i+T}\Big(|(w^\delta_\xi)'(t)|^2+ V_{\rm per}(w^\delta_\xi(t))\Big)dt\\
&&= 
\int_{T_i}^{T_i+T}\Big(|(w^\delta_\xi)'(t-T_i)|^2+V_{\rm per}(p^\delta_\xi(t-T_i) +(t-T_i)\xi+T_i\xi)\Big)dt\\
&&\le\int_{0}^{T}\Big(|(w^\delta_\xi)'(t)|^2+V_{\rm per}(w^\delta_\xi(t))\Big)dt+ T\delta
\le T\,f_{\rm hom}(\xi)+2T\delta,
\end{eqnarray*}
while
\begin{eqnarray*}
\int_{T_i+T}^{T_{i+1}}\big(|(w^\delta_\xi)'(t)|^2+ V_{\rm per}(w^\delta_\xi(t))\big)dt=\int_{T_i+T}^{T_{i+1}}\big(|\xi|^2+ V_{\rm per}(t\xi)\big)dt
\le L_\delta\big(|\xi|^2+ \max V_{\rm per}\big)
\end{eqnarray*}

Hence, if we set
$u^\delta_\e(t):=\e w^\delta_\xi\big(\frac{t}\e\big)$, then, using \eqref{tedd} and \eqref{tii}, we have
\begin{equation}\label{unno}
\limsup_{\e\to 0} F_\e(u^\delta_\e, I)\le (f_{\rm hom}(\xi)+ C_\xi\delta)|I|
\end{equation}
for every interval $I$ contained in $(0,1)$, where $ C_\xi= 2+|\xi|^2+\max V_{\rm per}$ and
$$
F_\e(u, I)=\int_I \Big(|u'(t)|^2+V_{\rm per} \Big(\frac{u(t)}\e\Big)\Big)\,dt.
$$

With fixed $\e>0$ we define $i_\e$ as the largest integer $i$ such that $T_{i+1}<\frac1\e$. For $j\in\{0,\ldots,N\}$ and $i\le i_\e$ we set $a_{ij}:=a_j+T_i$ and $x_{ij}:=w^\delta_\xi(a_{ij})$. For $i<i_\e$ we also set $a_{i,N+1}:=T_{i+1}$, so that the interval $[T_i, T_{i+1}]$ is the union of the non-overlapping intervals $[a_{i\,j-1}, a_{ij}]$ for $j\in\{1,\ldots,N+1\}$. Finally, we set $a_{i_\e,N+1}:=\frac1\e$. We recall that $\{\xi_1,\ldots,\xi_N\}$ are introduced in the definition of $p^\delta_\xi$, while we set $\xi_{N+1}:=0$, so that $(p^\delta_\xi)'=\xi_j$ on $(a_{i\,j-1}, a_{ij})$.

We fix $\eta>0$, small enough to be made precise in the following, and for every $i\le i_\e$ and $j\in\{1,\ldots,N+1\}$ we construct a  function on $[a_{i\,j-1},a_{ij}]$, which takes the values of $w^\delta_\xi$ at the endpoints, so as to have a function globally defined in $[0,\frac1\e]$.
Let $\Pi_{j}$ be the hyperplane through $0$ orthogonal to $\xi_j+\xi$. For each $z\in B_\eta\cap \Pi_{j}=:B^{d-1}_{\eta,j}$ we consider the segment  parameterized as $t\mapsto x_{i\,j-1}+ z+(t-a_{i\,j-1})(\xi_j+\xi)$ for $t\in[a_{i\,j-1},a_{ij}]$, and the cylinder $C_{ij}$ in $\mathbb R^d$ obtained as the union of such segments. Then there exists $z_{ij}\in B^{d-1}_{\eta,j}$ such that 
\begin{equation}\label{f-0}
\int_{a_{i\,j-1}}^{a_{ij}} W(x_{i\,j-1}+ z_{ij}+(t-a_{i\,j-1})(\xi_j+\xi))\,dt \le C_\eta\int_{C_{ij}}  W(x)\dx,
\end{equation}
for a suitable constant $C_\eta$ depending only on $\eta$.

We fix $\alpha$ with $\frac12<\alpha<\frac{p}d$. For every $i,j$, we apply Lemma \ref{lemma5} first with $x_0=x_{i\,j-1}$ and $y_0=x_{i\,j-1}+z_{ij}$ and then with $x_0=x_{ij}+z_{ij}$ and $y_0= x_{ij}$, and we obtain that  there exist   $\gamma_{ij}\in H^1((-|z_{ij}|^{1/\alpha},|z_{ij}|^{1/\alpha});\mathbb R^d)$, $\overline\gamma_{ij}\in H^1((-|z_{ij}|^{1/\alpha},|z_{ij}|^{1/\alpha});\mathbb R^d)$ such that 
\begin{eqnarray}
&&\gamma_{ij}(-|z_{ij}|^{1/\alpha})= x_{i\,j-1},\quad \gamma_{ij}(|z_{ij}|^{1/\alpha})= x_{i\,j-1}+z_{ij},\label{f-1}\\
&&\overline\gamma_{ij}(-|z_{ij}|^{1/\alpha})= x_{ij}+z_{ij},\quad \gamma(|z_{ij}|^{1/\alpha})= x_{ij},\label{f-2}\\
&&\int_{-|z_{ij}|^{1/\alpha}}^{|z_{ij}|^{1/\alpha}}
|\gamma_{ij}'(t)|^2 dt \le \tfrac{2\alpha^2}{2\alpha-1}\eta^{\frac{2\alpha-1}\alpha}
\label{f-3}\\
&&\int_{-|z_{ij}|^{1/\alpha}}^{|z_{ij}|^{1/\alpha}}|\overline\gamma_{ij}'(t)|^2dt\le \tfrac{2\alpha^2}{2\alpha-1}\eta^{\frac{2\alpha-1}\alpha}\label{f-4}
\\
&&\int_{-|z_{ij}|^{1/\alpha}}^{|z_{ij}|^{1/\alpha}}W(\gamma_{ij}(t))dt\le C\eta^{\frac{p-\alpha d}{\alpha p}} \label{f-5}\\
&&\int_{-|z_{ij}|^{1/\alpha}}^{|z_{ij}|^{1/\alpha}}W(\overline\gamma_{ij}(t))\big)dt\le C\eta^{\frac{p-\alpha d}{\alpha p}}\label{f-6}
\end{eqnarray} 
for some constant $C$, independent of $\e,i,j$ by \eqref{2-1}.

We then consider the function $\widehat w^\delta_\xi$ defined on $[a_{i\,j-1}-2|z_{ij}|^{1/\alpha}, a_{ij}+2|z_{ij}|^{1/\alpha}]$ by setting
\begin{eqnarray*}
\widehat w^\delta_\xi(t):=\begin{cases}
\gamma_{ij}(t-a_{i\,j-1}+|z_{ij}|^{1/\alpha}) &\hbox{if }t\in [a_{i\,j-1}-2|z_{ij}|^{1/\alpha}, a_{i\,j-1}]\\
x_{i\,j-1}+ z+(t-a_{i\,j-1})(\xi_j+\xi)&\hbox{if }t\in [a_{i\,j-1},  a_{ij}]\\
\overline\gamma_{ij}(t-a_{ij}-|z_{ij}|^{1/\alpha}) &\hbox{if }t\in [a_{ij},  a_{ij}+2|z_{ij}|^{1/\alpha}].
\end{cases}
\end{eqnarray*} 
Let $\widetilde w^\delta_\xi(t)\colon [0,+\infty)\to \mathbb R^d$ be defined on $[a_{i\,j-1},  a_{ij}]$ by scaling $\widehat w^\delta_\xi$ according to the change of variables 
$$
s=(t-a_{i\,j-1})\frac{a_{ij}-a_{i\,j-1}+4|z_{ij}|^{1/\alpha}}{a_{ij}-a_{i\,j-1}}+a_{i\,j-1}-2|z_{ij}|^{1/\alpha},
$$
so that $\widetilde w^\delta_\xi(t)= \widehat w^\delta_\xi(s)$. Since the functions match at the common endpoints of the intervals of definition, we have $\widetilde w^\delta_\xi\in H^1(0,\frac1\e;\mathbb R^d)$, with $w(0)=0$ and $w(\frac1\e)=\frac1\e\xi$. 
Note that 
\begin{eqnarray}\label{eq-n1}
 \int_{a_{i\,j-1}}^{a_{ij}}|(\widetilde w^\delta_\xi)'(t)|^2dt 
\le \kappa_\eta
 \int_{a_{i\,j-1}-2|z_{ij}|^{1/\alpha}}^{a_{ij}+2|z_{ij}|^{1/\alpha}}|(\widehat w^\delta_\xi)'(t)|^2dt, \end{eqnarray}
 where
 \begin{equation*}
\kappa_\eta:=\max_{j\in\{1,\ldots,N\}}\Bigl(\frac{a_j-a_{j-1} +4\eta^{1/\alpha}}{a_j-a_{j-1}}\Bigr)^2\vee (1+4\eta^{1/\alpha})^2.
\end{equation*}
Since by \eqref{tii} the last term in this equation takes into account the case $j=N+1$,  we have
\begin{equation}
\kappa_\eta\ge \sup_{i\le i_\e}\max_{j\in\{1,\ldots,N+1\}}\Big(\frac{a_{ij}-a_{i\,j-1}+4|z_{ij}|^{1/\alpha}}{a_{ij}-a_{i\,j-1}}\Big)^2,
\end{equation}
which justifies the estimate for the change of variable. 

The right-hand side in \eqref{eq-n1} is equal to 
\begin{eqnarray*}
&&\int_{a_{i\,j-1}-2|z_{ij}|^{1/\alpha}}^{a_{i\,j-1}}|(\widehat w^\delta_\xi)'(t)|^2dt+ \int_{a_{i\,j-1}}^{a_{ij}}|(\widehat w^\delta_\xi)'(t)|^2dt+
\int_{a_{ij}}^{a_{ij}+2|z_{ij}|^{1/\alpha}}|(\widehat w^\delta_\xi)'(t)|^2dt\\
&\le &\tfrac{2\alpha^2}{2\alpha-1}\eta^{\frac{2\alpha-1}\alpha}
+
\int_{a_{i\,j-1}}^{a_{ij}}|(w^\delta_\xi)'(t)|^2dt
+\tfrac{2\alpha^2}{2\alpha-1}\eta^{\frac{2\alpha-1}\alpha},
 \end{eqnarray*}
where we have used \eqref{f-3} and \eqref{f-4}, and the equality $(\widehat w^\delta_\xi)'=\xi+\xi_j=(w^\delta_\xi)'$ in $[a_{i\,j-1},  a_{ij}]$.
Taking into account this estimate, summing up inequalities \eqref{eq-n1}  for $i\in\{0,\ldots,i_\e\}$ and $j\in\{1,\ldots, N+1\}$, and taking into account that $i_\e T\le \frac1\e$, we then obtain 
$$
 \int_0^{1/\e}|(\widetilde w^\delta_\e)'(t)|^2dt\le \kappa_\eta\Big( \int_0^{1/\e}|(w^\delta_\e)'(t)|^2dt
 +2(N+1)\big(\tfrac{1}{\e T}+1\big)\tfrac{2\alpha^2}{2\alpha-1}\eta^{\frac{2\alpha-1}\alpha}\Big).
$$

 We set $\widetilde u^\delta_\e(t)=\e\,\widetilde w^\delta_\xi(t/\e)$,  recall that $u^\delta_\e(t)=\e w^\delta_\xi\big(\frac{t}\e\big)$. We then have
\begin{eqnarray*}
 \int_0^1|(\widetilde u^\delta_\e)'(t)|^2dt&=&
\e \int_0^{1/\e}|(\widetilde w^\delta_\e)'(t)|^2dt\\
&\le& \e   \kappa_\eta  \int_0^{1/\e}|(w^\delta_\e)'(t)|^2dt
 +2\kappa_\eta(N+1)\big(\tfrac{1}{T}+\e\big)\tfrac{2\alpha^2}{2\alpha-1}\eta^{\frac{2\alpha-1}\alpha} \\
 &\le &
  \kappa_\eta \int_0^1|(u^\delta_\e)'(t)|^2dt + C_\delta\eta^{\frac{2\alpha-1}\alpha} ,
\end{eqnarray*}
where we have taken into account that we may suppose $\e\le 1$ and $\eta$ small enough so that $\kappa_\eta\le 2$,
with $C_\delta$ a positive constant depending on $\delta$ but independent of $\e$ and $\eta$.
Noting that  $\|\widetilde w^\delta_\xi-w^\delta_\xi\|_\infty\le 2\eta^{1/\alpha}+2\eta$
we also obtain
\begin{equation*}
\int_0^1 V_{\rm per}\Big(\frac{\widetilde u^\delta_\e(t)}\e\Big)dt
\le
\int_0^1 V_{\rm per}\Big(\frac{u^\delta_\e(t)}\e\Big)dt +\omega(2\eta^{1/\alpha}+2\eta)
\end{equation*}
where $\omega$ is a modulus of continuity for $V_{\rm per}$.
We then obtain
\begin{eqnarray*}
F_\e(\widetilde u^\delta_\e)\le \kappa_\eta F_\e(u^\delta_\e)+\omega(2\eta^{1/\alpha}+2\eta)+C_\delta\eta^{\frac{2\alpha-1}\alpha} ,
\end{eqnarray*}
Hence, by \eqref{unno} we obtain
\begin{equation}
\limsup_{\e\to 0} F_\e(\widetilde u^\delta_\e)\le \kappa_\eta(f_{\rm hom}(\xi)+C_\xi \delta)+\omega(2\eta^{1/\alpha}+2\eta)+C_\delta\eta^{\frac{2\alpha-1}\alpha}.
\end{equation}
 
The perturbation term is estimated as follows. We have
\begin{eqnarray}\label{spot}
\int_0^1W\Big(\frac{\widetilde u^\delta_\e(t)}\e\Big)\,dt
=\int_0^1W\Big(\widetilde w^\delta_\e\Big(\frac{t}\e\Big)\Big)\,dt=
\e \int_0^{1/\e}W(\widetilde w^\delta_\e(s))ds.
\end{eqnarray}
Using \eqref{f-3}--\eqref{f-6}, and \eqref{f-0}, we have
\begin{eqnarray}\label{spett}\nonumber
\int_0^{1/\e}W(\widetilde w^\delta_\e(s))ds&\le&
 \sum_{i=0}^{i_\e} \sum_{j=1}^{N+1}\int^{a_{ij}}_{a_{i\,j-1}}W(x_{i\,j-1}+ z_{ij}+(s-a_{i\,j-1})(\xi_j+\xi))ds\\ \nonumber
 &&  + 2(i_\e+1) (N+1) C\eta^{\frac{p-\alpha d}{\alpha p}}
 \\ 
  &\le&
 C_\eta\sum_{i=0}^{i_\e} \sum_{j=1}^{N+1}\int_{C_{ij}}W(x)dx+ 2\big(\tfrac1{\e T}+1\big) (N+1) C\eta^{\frac{p-\alpha d}{\alpha p}}.\end{eqnarray}
Using this estimate in \eqref{spot}, we obtain
\begin{equation}\label{fit}
\int_0^1W\Big(\frac{\widetilde u^\delta_\e(t)}\e\Big)\,dt\le
 \e C_\eta\sum_{i=0}^{i_\e} \sum_{j=1}^{N+1}\int_{C_{ij}}W(x)dx+ C_\delta\eta^{\frac{p-\alpha d}{\alpha p}}. 
\end{equation}

By the boundedness of $p^\delta_\xi$ there exists $M_\delta$, independent of $i$ and $j$, such that the image of $\widetilde w^\delta_\e$ restricted to $[a_{i\,j-1},a_{ij}]$ is contained in $T_i\xi+B_{M_\delta}$. Hence, 
$$
\sum_{j=1}^{N=1}\int_{C_{ij}}W(x)dx\le (N+1) \int_{T_i\xi+B_{M_\delta}}W(x)dx.
$$
Recalling that $T_i\ge i T$, we note that at most $K_\delta:=\lfloor\frac{4M_\delta}{T|\xi|}\rfloor+1$ such balls intersect, so that, in the notation \eqref{esserxi},
\begin{equation}\label{new}
\sum_{i=0}^{i_\e} \sum_{j=1}^{N+1}\int_{C_{ij}}W(x)dx\le (N+1) K_\delta
\int_{B_{R_\e}\cap S^{M_\delta}_\xi}W(x)dx,
\end{equation}
where $R_\e:=T_{i_\e}|\xi|+M_\delta$. Note that
$
R_\e\le \frac1\e|\xi|+M_\delta\le \frac2\e|\xi|
$
for $\e$ small enough, so that,
thanks to \eqref{fit} and \eqref{new} we have
\begin{eqnarray*}
\int_0^1W\Big(\frac{\widetilde u^\delta_\e(t)}\e\Big)\,dt
\le \e C_\eta (N+1) K_\delta\int_{B_{R_\e}\cap S^M_\xi}W(x)dx
+C_\delta\eta^{\frac{p-\alpha d}{\alpha p}}\\
\le
 C_\eta\frac{2|\xi|(N+1) K_\delta}{R_\e}\int_{B_{R_\e}\cap S^M_\xi}W(x)dx
+C_\delta\eta^{\frac{p-\alpha d}{\alpha p}}
\end{eqnarray*}
for $\e$ small enough, and
\begin{eqnarray*}
\limsup_{\e\to0}\int_0^1W\Big(\frac{\widetilde u^\delta_\e(t)}\e\Big)\,dt
\le C_\delta
\eta^{\frac{p-\alpha d}{\alpha p}}\\
\end{eqnarray*}
by \eqref{2-2} and \eqref{tedd}. 
We then obtain
\begin{eqnarray*}
\limsup_{\e\to 0}G_\e(\widetilde u^\delta_\e)\le 
\kappa_\eta(f_{\rm hom}(\xi)+C_\xi \delta)+\omega(2\eta^{1/\alpha}+2\eta) +
C_\delta\eta^{\frac{p-\alpha d}{\alpha p}},
\end{eqnarray*}
and, noting that $\kappa_\eta\to 1$ as $\eta\to 0$, letting first $\eta\to 0$ and then $\delta\to 0$, given $\lambda>0$ we obtain \eqref{gls} for a suitable choice of the parameters $\delta$ and $\eta$. By the arbitrariness of $\lambda>0$ we obtain that
$$
\Gamma\hbox{-}\limsup_{\e\to 0} G_\e(u)\le f_{\rm hom}(\xi)=\int_0^1 f_{\rm hom}(u'(t))\,dt
$$
for affine functions $u(t)=t\xi$ with $\xi\in\Xi$.\medskip

In the case $q\neq 0$ we translate the recovery sequence $u^\delta_\e$ constructed above for $F_\e$ by $\e\lfloor \frac{q}\e\rfloor$, where this vector is defined component-wise by taking the integer parts of the components, and then use the same argument as above to construct a recovery sequence $\widetilde u^\delta_\e$ for $G_\e$. Thanks to the periodicity of $V_{\rm per}$ this sequence provides a good upper bound, but it does not match the boundary conditions, since $\widetilde u^\delta_\e(0)-u(0)= \widetilde u^\delta_\e(1)-u(1)= \e\lfloor \frac{q}\e\rfloor-q$. In order to match the boundary conditions, we can proceed similarly to the case above, first applying Lemma \ref{lemma5} with $x_0=q$ and $y_0=\e \lfloor \frac{q}\e\rfloor$, and similarly to the second endpoint. Note that we have $|x_0-y_0|\le \sqrt d$. Hence, if we let, respectively, $\gamma_0, \gamma_1\colon[-d^{\frac1{2\alpha}},d^{\frac1{2\alpha}}]\to \mathbb R^d$ be the functions given by Lemma \ref{lemma5} at both endpoints, we can define the functions $\widehat u^\delta_\e\colon [-2\e d^{\frac1{2\alpha}}, 1+2\e d^{\frac1{2\alpha}}]$ as 
$$
\widehat u^\delta_\e(t)= \begin{cases} \e \gamma_0\big(\frac{t+\e d^{\frac1{2\alpha}}}\e\big) 
& \hbox{ if }t\in [-2\e d^{\frac1{2\alpha}},0]\\
u^\delta_\e(t) & \hbox{ if }t\in [0,1]\\
\e \gamma_1\big(\frac{t-1-\e d^{\frac1{2\alpha}}}\e\big) 
& \hbox{ if }t\in [1,1+2\e d^{\frac1{2\alpha}}].
\end{cases}
$$
Note that Lemma \ref{lemma5} ensures that the contribution to the energy in the two extreme intervals is of order $\e$. Finally, we define $\widetilde u^\delta_\e$ by an affine change of variables rescaling the domain to $[0,1]$. 
\medskip

We now turn our attention to the case of a piecewise-affine target function $u$.
Namely, we assume that $0=t_0<t_1<\ldots< t_K=1$ and $\xi_j,q_j\in\mathbb R^d$ exist such that
$u(t)=t\xi_jt+q_j$ on $[t_{j-1}, t_j]$ . Moreover, we assume that the vectors $\xi_j$
belong to $\Xi$.
We can use the construction just described for $t\xi_j+q_j$ in the place of $t\xi +q$ and $[t_{j-1}, t_j]$ in the place of $[0,1]$, thus obtaining that for such functions 
\begin{equation}\label{h-0}
\Gamma\hbox{-}\limsup_{\e\to 0} G_\e(u)\le\sum_{j=1}^K (t_j-t_{j-1}) f_{\rm hom}(\xi_j)=\int_0^1 f_{\rm hom}(u'(t))dt=  \Gamma\hbox{-}\lim_{\e\to 0} F_\e(u).
\end{equation}
Note that in this argument it is essential that we are able to maintain the values at all $t_j$ in the
construction of the recovery sequences in each interval $[t_{j-1}, t_j]$, as done above. 
\medskip

Finally, if $u\in H^1((0,1);\mathbb R^d)$ then we may take a sequence of piecewise-affine functions $u_j$ strongly converging to $u$ in $H^1((0,1);\mathbb R^d)$ with $u_j'(t)\in\Xi$ for almost every $t$, and such that $u_j(0)=u(0)$ and $u_j(1)=u(1)$, and recall that the 
$\Gamma\hbox{-}\limsup$ is a lower-semicontinuous functional, so that, by \eqref{h-0} we have
\begin{eqnarray*}
\Gamma\hbox{-}\limsup_{\e\to 0} G_\e(u)&\le& \liminf_{j\to+\infty} \Big(\Gamma\hbox{-}\limsup_{\e\to 0} G_\e(u_j)\Big)\\
&\le& \liminf_{j\to+\infty} \int_0^1 f_{\rm hom}(u_j'(t))dt=
\int_0^1 f_{\rm hom}(u'(t))dt,
\end{eqnarray*}
which completes the proof of the theorem.
\end{proof}

\begin{remark}[Stability for uniformly almost-periodic potentials]\rm 
The construction of the recovery sequences in the previous proof is based on the possibility of having almost-periodic correctors. This is the case also if $V_{\rm per}$ is uniformly almost periodic; that is, it is the uniform limit of (possibly incommensurate) trigonometric polynomials. We refer to \cite[Chapter 15]{BDF} for details.
 \end{remark}

\section{Extensions}
In this section we extend the previous  results by considering more general Lagrangians with possibly different growth conditions, and by examining the case of unbounded time intervals.

\subsection{Extension to general integrands}\label{sec:ext}

{The argument in the proof of the stability result relies on showing that a suitable small variation of almost-correctors gives test functions that are negligible for the perturbation potential $W$ 
and are still recovery sequences for the unperturbed part. This argument can be repeated whenever 
almost-correctors as described in Section \ref{High-case-sect} exist, and in particular if the energy density $|\xi|^2+V(x)$ is replaced by a periodic Lagrangian $L$, as in the following result.

In the following result, given $r>1$ the $\Gamma$-limits are computed with respect to the weak topology of $W^{1,r}$, or equivalently with respect to the strong topology of $L^r$.

\begin{theorem}[Stability Theorem -- general Lagrangians]\label{main-d-L}
Let $W\colon\mathbb R^d\to [0,+\infty)$ be a bounded Borel function satisfying  \eqref{2-2}. Let $L_{\rm per}\colon \mathbb R^d\times\mathbb  R^d\to [0,+\infty)$ be a Carath\'eodory function such that $x\mapsto L_{\rm per}(x,\xi)$ is $(0,1)^d$-periodic for all $\xi$ and $r>1$, $c_1,c_2>0$ exist such that
\begin{equation}
c_1|\xi|^r\le L_{\rm per}(x,\xi)\le c_2(1+|\xi|^r)
\end{equation}
for all $(x,\xi)$, and let
\begin{equation}
F_\e(u)=\int_0^1 L_{\rm per}\Big(\frac{u(t)}\e,u'(t)\Big)dt\ \hbox{ and }\  G_\e(u)=\int_0^1\Big( L_{\rm per}\Big(\frac{u(t)}\e,u'(t)\Big)+W\Big(\frac{u(t)}\e\Big)\Big)dt
\end{equation}
be defined on $W^{1,r}((0,1);\mathbb R^d)$.
Then
\begin{equation}\label{claim-uap}
\Gamma\hbox{-}\lim_{\e\to 0} G_\e =\Gamma\hbox{-}\lim_{\e\to 0} F_\e =
\int_0^1L_{\rm hom} (u')dt,
\end{equation}
where $L_{\rm hom}$ satisfies the {\em asymptotic homogenization formula}
\begin{equation}
L_{\rm hom} (\xi)=\lim_{T\to+\infty}\frac1T\min\Big\{\int_0^{T} L_{\rm per}(v(t)+ t\xi,v'(t)+\xi)dt : v\in W^{1,r}_0((0,T);\mathbb R^d) \Big\}.
\end{equation}
\end{theorem}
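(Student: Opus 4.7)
The plan is to mimic the proof of Theorem \ref{main-d} nearly verbatim, using almost-correctors for the general Lagrangian $L_{\rm per}$ in place of those for $|\xi|^2 + V_{\rm per}$. Since $W \ge 0$, the inequality $\Gamma\hbox{-}\liminf G_\e \ge \Gamma\hbox{-}\lim F_\e = \int_0^1 L_{\rm hom}(u')\,dt$ is immediate from the classical periodic homogenization theorem for $r$-growth Lagrangians (\cite[Ch.~15]{BDF}), so only the matching upper bound needs to be proved. As in Theorem \ref{main-d}, by lower semicontinuity of $\Gamma\hbox{-}\limsup$ and $W^{1,r}$-density of piecewise-affine functions with slopes in the dense set $\Xi$ (combined with the $r$-growth which ensures continuity of $\int L_{\rm hom}(u')dt$ on $W^{1,r}$), it suffices to construct a recovery sequence for $u(t) = t\xi + q$ with $\xi \in \Xi$.

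Fix $\delta > 0$. The asymptotic homogenization formula for $L_{\rm hom}$ together with the $r$-growth assumption yields, via the constructions of \cite[Ch.~15]{BDF}, piecewise-affine almost-correctors $p^\delta_\xi \in W^{1,r}_0((0,T);\mathbb R^d)$ together with a quasi-periodic extension governed by ergodic-theoretic spacings $T_i$, exactly as in Remark \ref{ap-corr}. We then build the modified corrector $\widetilde w^\delta_\xi$ by shifting each affine piece on $(a_{i,j-1}, a_{ij})$ by a small transversal vector $z_{ij} \in B^{d-1}_{\eta,j}$ and bridging the shifted pieces via the connecting arcs from Lemma \ref{lemma5}. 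To accommodate $r$-growth instead of quadratic growth, we select $\alpha \in (1 - 1/r,\, 1)$: a quick calculation shows that the $L^r$-norm of each connector derivative is of order $\eta^{(r\alpha - r + 1)/\alpha}\to 0$ as $\eta\to 0$, so that the upper bound $L_{\rm per}(x,\xi)\le c_2(1+|\xi|^r)$ ensures the total connector contribution to $F_\e$ vanishes in the limit. Boundedness of $W$ makes $W$-control on connectors trivial; $W$-control on the shifted affine pieces follows, as in Theorem \ref{main-d}, from the mean-value selection of $z_{ij}$ over $B^{d-1}_{\eta,j}$ combined with the cylinder hypothesis \eqref{2-2}.

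The main obstacle is the step where one compares $\int L_{\rm per}(\widetilde w^\delta_\xi, (\widetilde w^\delta_\xi)')$ with $\int L_{\rm per}(w^\delta_\xi, (w^\delta_\xi)')$: in Theorem \ref{main-d} this used the uniform continuity of $V_{\rm per}$ through the bound $\|\widetilde w - w\|_\infty \le 2\eta^{1/\alpha} + 2\eta$, which is unavailable for a merely Carathéodory $L_{\rm per}$. We propose to handle this by a joint averaging argument: averaging both the $L_{\rm per}$-integral and the $W$-integral over the transversal shifts $z \in B^{d-1}_{\eta,j}$, Fubini identifies the average $L_{\rm per}$-contribution with the mean of $L_{\rm per}(\cdot, \xi_j+\xi)$ over the cylinder $C_{ij}$; by the quasi-periodicity of $L_{\rm per}$ along the cylinder axis (the very property underpinning the existence of almost-correctors), this cylindrical mean is close to the axial integral for sufficiently long pieces, uniformly in $\eta$, so that a pigeonhole choice of $z_{ij}$ simultaneously renders the $F_\e$-deviation and the $W$-integral small. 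Alternatively, one can first prove the theorem for $L_{\rm per}$ continuous in $x$, where the Theorem \ref{main-d} proof applies verbatim, and then pass to the general Carathéodory case via mollification of $L_{\rm per}$ in the space variable, using the convergence $L_{\rm hom}^n \to L_{\rm hom}$ and the stability of $\Gamma\hbox{-}\limsup$ under such approximations. Once the upper bound holds on affine functions $u(t) = t\xi + q$ with $\xi \in \Xi$, the extension first to piecewise-affine $u$ (matching boundary values on each piece as in Theorem \ref{main-d}) and then to general $u \in W^{1,r}((0,1);\mathbb R^d)$ is routine.
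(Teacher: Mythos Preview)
Your overall strategy coincides with the paper's proof, which simply invokes \cite[Theorem~15.3 and Proposition~15.5]{BDF} for the homogenization of $F_\e$ and the existence of almost-correctors, and then asserts that the argument of Theorem~\ref{main-d} goes through ``word for word''. Your adjustment of the connector exponent to $\alpha\in(1-\tfrac1r,1)$ is correct and matches the paper's remark following the theorem. You are in fact more careful than the paper in isolating the one step of Theorem~\ref{main-d} that does \emph{not} transplant verbatim to a merely Carath\'eodory $L_{\rm per}$: the comparison $\int_0^1 V_{\rm per}(\widetilde u^\delta_\e/\e)\,dt \le \int_0^1 V_{\rm per}(u^\delta_\e/\e)\,dt + \omega(2\eta^{1/\alpha}+2\eta)$ uses a spatial modulus of continuity for $V_{\rm per}$, which is unavailable when $x\mapsto L_{\rm per}(x,\xi)$ is only measurable. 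The paper's terse proof does not comment on this.

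That said, neither of your proposed workarounds closes the gap as written. For (a), the transversal average $\fint_{B^{d-1}_{\eta,j}} \int_{a_{i\,j-1}}^{a_{ij}} L_{\rm per}(w(t)+z,\xi_j+\xi)\,dt\,dz$ has no reason to be close to the unshifted axial integral on a single piece: the segments $(a_{i\,j-1},a_{ij})$ have length bounded independently of $\e$, and quasi-periodicity \emph{along} the axis says nothing about a \emph{transversal} average on a short segment. What makes the $z$-averaging succeed for $W$ is the external smallness hypothesis \eqref{2-2}; for $L_{\rm per}$ there is no such smallness, only the near-optimality of the particular unshifted trajectory $w^\delta_\xi$, and this is precisely what an arbitrary shift may destroy---so a simultaneous pigeonhole would require a bound on the $L_{\rm per}$-side that you have not established. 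For (b), mollification in $x$ does produce a continuous periodic approximant $L^n_{\rm per}$ to which the argument of Theorem~\ref{main-d} applies, but passing back to $L_{\rm per}$ needs both the convergence $L^n_{\rm hom}\to L_{\rm hom}$ and a comparison between $G_\e$ and $G^n_\e$; since mollification gives no pointwise inequality $L_{\rm per}\le L^n_{\rm per}$, the latter is not automatic, and the diagonal argument you allude to remains to be written out.
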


\begin{proof} The Lagrangian $L_{\rm per}$ satisfies the hypotheses of \cite[Theorem 15.3]{BDF}, which states the convergence of $F_\e$ and their characterization in terms of $L_{\rm hom}$. 
The construction of almost-correctors is carried on in the proof of \cite[Proposition 15.5]{BDF}, after which the proof of Theorem \ref{main-d} can be followed word for word.
\end{proof}

\begin{remark}\rm Note that for simplicity we have required that $W$ be bounded. Otherwise, \eqref{2-1} must be assumed for some $p>\frac{r-1}rd$, for which the analog of Lemma \ref{lemma5} with $1-\frac1r<\alpha<\frac{p}d$. Note however that in the applications to Hamilton-Jacobi equations $W$ will be bounded.
\end{remark}

\begin{remark}[Time-dependent Lagrangians]\rm
The extension to Lagrangians $L_{\rm per}(t,x,\xi)$ also depending (almost-)periodically on the time variable $t\in\mathbb R$ can be carried on using the results of \cite[Section 15]{BDF}. Unless suitable continuity is required in both the $t$ and $s$ variables, it is necessary to state some additional assumptions, due to the fact that we have to consider sections of the Lagrangian; that is, functions $t\mapsto L_{\rm per}(t,\xi t, \xi )$, which are in general not periodic even if $L_{\rm per}$ is periodic.
We assume a kind of {\em uniform almost-periodicity condition}; more precisely, that for all $\xi\in\mathbb R^d$ and $\eta>0$ the sets 
\begin{eqnarray}\label{uap}\nonumber
&&{\mathcal T}^\xi_\eta:=\{\tau\in \mathbb R: |L_{\rm per}(t+\tau, x+\xi \tau,  \xi )-L_{\rm per}(t, x,  \xi )|<\eta(1+|\xi|^r)\hbox{ for all }(t,x,\xi)\}\\
&&{\mathcal S}_\eta:=\{\sigma\in \mathbb R^d: |L_{\rm per}(t, x+\sigma,  \xi )-L_{\rm per}(t, x,  \xi )|<\eta(1+|\xi|^r)\hbox{ for all }(t,x,\xi)\}
\end{eqnarray}
are uniformly dense; that is, there exists $\Lambda_\eta>0$ such that  we have   dist$(\sigma,{\mathcal S}_\eta\setminus\{\sigma\})<\Lambda_\eta$ for all $\sigma\in {\mathcal  S}_\eta$ and for all $\xi\in\mathbb R^d$ we have dist$(\tau,{\mathcal T}^\xi_\eta\setminus\{\tau\})<\Lambda_\eta$ for all $\tau\in {\mathcal T}^\xi_\eta$. Note that we do not assume that $L_{\rm per}$ is continuous, but condition \eqref{uap} holds if $L_{\rm per}$ is continuous and periodic in the first two variables with a modulus of continuity controlled by $(1+|\xi|^r)$. 

If $L_{\rm per}$ is a Carath\'eodory function satisfying \eqref{uap}  and the growth condition
\begin{equation}
c_1|\xi|^r\le L_{\rm per}(t,x,\xi)\le c_2(1+|\xi|^r)
\end{equation}
for all $(t,x,\xi)$ is satisfied, and if we define
\begin{equation}
F_\e(u)=\int_0^1 L_{\rm per}\Big(\frac{t}\e,\frac{u(t)}\e,u'(t)\Big)dt\, \hbox{ and }  \,G_\e(u)=\int_0^1\Big( L_{\rm per}\Big(\frac{t}\e,\frac{u(t)}\e,u'(t)\Big)+W\Big(\frac{u(t)}\e\Big)\Big)dt
\end{equation}
on $W^{1,r}((0,1);\mathbb R^d)$,
then the stability result  in \eqref{claim-uap} still holds, with
$L_{\rm hom}$ satisfying
\begin{equation}
L_{\rm hom} (\xi)=\lim_{T\to+\infty}\frac1T\min\Big\{\int_0^{T} L_{\rm per}(t,v(t)+ t\xi,v'(t)+\xi)dt : v\in W^{1,r}_0((0,T);\mathbb R^d) \Big\}.
\end{equation}
\end{remark}
}

\begin{remark}\rm The case $r=1$ would require a different treatment, since the functionals are not equicoercive in the weak topology of $W^{1,1}$. The standard approach would be to extend their definition to the space $BV$ of functions of bounded variation. For a periodic homogenization result in such a context we refer to \cite{MR1649453}. 
\end{remark}

\subsection{Stability in $(0,+\infty)$}\label{stab-hl}
We now consider the extension of the $\Gamma$-convergence stability result to some functionals defined on functions on the half line, in view of the applications to steady-state Hamilton-Jacobi equations.

Let $r>1$ be a fixed exponent, and let $\lambda>0$ be a given parameter. We define the space
$
W^{1,r}_\lambda((0,+\infty);\mathbb R^d)$  of all functions $u\colon(0,+\infty)\to\mathbb R^d$ such that  $u\in W^{1,r}((0,t_0);\mathbb R^d)$ for all $t_0>0$ and 
$$
\int_0^{+\infty} |u'(t)|^r  e^{-\lambda t}dt<+\infty,
$$
endowed with the norm
$$
\|u\|_\lambda=\Big(|u(0)|^r +\int_0^{+\infty} |u'(t)|^r  e^{-\lambda t}dt\Big)^{\frac1r}.
$$
We observe that $W^{1,r}_\lambda((0,+\infty);\mathbb R^d)$ is a Banach space. In the case $r=2$ we write $H^1_\lambda((0,+\infty);\mathbb R^d)$ in the place of $W^{1,2}_\lambda((0,+\infty);\mathbb R^d)$. 

\begin{proposition}\label{const-dense}
The set of functions $u\in W^{1,r}_\lambda((0,+\infty);\mathbb R^d)$ that are equal to $0$ outside some bounded interval is dense in $W^{1,r}_\lambda((0,+\infty);\mathbb R^d)$.
\end{proposition}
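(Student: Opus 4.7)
The plan is to approximate each $u \in W^{1,r}_\lambda((0,+\infty);\mathbb R^d)$ by smooth truncations of itself. Fix $\phi\colon[0,+\infty)\to[0,1]$ smooth with $\phi\equiv 1$ on $[0,1]$ and $\phi\equiv 0$ on $[2,+\infty)$, and set $\phi_n(t):=\phi(t/n)$, so that $\phi_n$ is supported in $[0,2n]$ and $|\phi_n'|\le\|\phi'\|_\infty/n$. Define $u_n:=\phi_n u$; each $u_n$ vanishes on $[2n,+\infty)$, belongs to $W^{1,r}_\lambda$ (its derivative $\phi_n u'+\phi_n' u$ is in $L^r$ and has compact support), and satisfies $u_n(0)=u(0)$. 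The initial-value contribution to $\|u_n-u\|_\lambda$ is thus zero, and the task reduces to showing that $\int_0^{+\infty}|u_n'-u'|^r e^{-\lambda t}\,dt\to 0$ as $n\to\infty$.

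Using $u_n'-u'=(\phi_n-1)u'+\phi_n' u$, where both terms are supported in $\{t\ge n\}$, and the elementary inequality $(a+b)^r\le 2^{r-1}(a^r+b^r)$, one gets
\begin{equation*}
\int_0^{+\infty}|u_n'-u'|^r e^{-\lambda t}\,dt\le 2^{r-1}\int_n^{+\infty}|u'|^r e^{-\lambda t}\,dt+\frac{2^{r-1}\|\phi'\|_\infty^r}{n^r}\int_n^{2n}|u(t)|^r e^{-\lambda t}\,dt.
\end{equation*}
The first term vanishes as $n\to\infty$ by absolute continuity of the integral, since $|u'|^r e^{-\lambda t}\in L^1((0,+\infty))$. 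Controlling the second term requires a uniform pointwise bound on $|u(t)|^r e^{-\lambda t}$, which is the key step.

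Writing $u(t)=u(0)+\int_0^t u'(s)\,ds$ and applying H\"older's inequality with exponents $r$ and $r/(r-1)$ to the factorization $|u'(s)|=\bigl(|u'(s)|e^{-\lambda s/r}\bigr)\cdot e^{\lambda s/r}$ yields
\begin{equation*}
\int_0^t|u'(s)|\,ds\le\Big(\int_0^t|u'|^r e^{-\lambda s}\,ds\Big)^{1/r}\Big(\tfrac{r-1}{\lambda}\Big)^{(r-1)/r}e^{\lambda t/r},
\end{equation*}
where we used $\int_0^t e^{\lambda s/(r-1)}\,ds\le\frac{r-1}{\lambda}e^{\lambda t/(r-1)}$. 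Raising to the $r$th power and multiplying by $e^{-\lambda t}$ gives $|u(t)|^r e^{-\lambda t}\le C(\|u\|_\lambda,\lambda,r)$ uniformly in $t\ge 0$. Therefore $\int_n^{2n}|u|^r e^{-\lambda t}\,dt\le Cn$, and the second term in the previous display is bounded by a constant times $n^{1-r}$, which tends to $0$ precisely because $r>1$.

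The main obstacle is the pointwise bound $|u(t)|^r e^{-\lambda t}\le C$: one must check that the worst possible growth of $u$ at infinity extracted from the weighted $L^r$ control on $u'$ via H\"older is exactly the borderline rate $e^{\lambda t/r}$ killed by the exponential weight. Once this is in hand, the cutoff argument is routine, with the hypothesis $r>1$ providing the extra decay $n^{1-r}\to 0$ needed to absorb the loss from the pointwise bound.
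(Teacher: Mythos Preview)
Your proof is correct, but the route differs from the paper's. You use a single-parameter cutoff $u_n=\phi_n u$ and are then forced to control $\frac{1}{n^r}\int_n^{2n}|u|^r e^{-\lambda t}\,dt$; to do this you establish the pointwise bound $|u(t)|^r e^{-\lambda t}\le C$ via H\"older, which is the substantive step in your argument. The paper instead uses a \emph{two-parameter} approximation: it first freezes $u$ at the value $u(j)$ for $t\ge j$, and only later (at a time $k\ge j$) cuts down to~$0$ on $[k,k+1]$. This decouples the size of the function from the location of the cutoff: the error term is $|u(j)|^r\int_k^{k+1}e^{-\lambda t}\,dt$, which vanishes as $k\to\infty$ for each fixed $j$, so no growth estimate on $u$ is ever needed. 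Your approach does more work but yields the useful side information that every $u\in W^{1,r}_\lambda$ satisfies $|u(t)|\le C e^{\lambda t/r}$; the paper's approach is shorter and more elementary, relying only on the tail of $\int|u'|^r e^{-\lambda t}$ and the exponential weight.
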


\begin{proof}
Let $u\in W^{1,r}_\lambda((0,+\infty);\mathbb R^d)$ and $\varphi\in C^\infty(\mathbb R)$ such that $\varphi(t)=1$ for $t\le 0$, $\varphi(t)=0$ for $t\ge 1$ and $|\varphi'(t)|\le 2$ for all $t$.
For all $j,k\in\mathbb N$ with $k\ge j$  let $u_{j,k}$ be defined as equal to $u$ on $(0,j)$ and to $u(j)\varphi(t-k)$ on $(j,+\infty)$. Then 
\begin{eqnarray*}
\|u_{j,k}-u\|^r_\lambda&=&\int_j^{+\infty} |u'(t)+\varphi'(t-k) u(j)|^re^{-\lambda t}dt\\
&\le & C_r\Big(\int_j^{+\infty}|u'(t)|^re^{-\lambda t}dt+ |u(j)|^r\int_k^{k+1} e^{-\lambda t}dt\Big)
\end{eqnarray*}
for some positive constant $C_r$ depending only on $r$.
This shows that $\lim\limits_{j\to+\infty}\lim\limits_{k\to+\infty} \|u_{j,k}-u\|^r_\lambda=0$ and proves the claim.
\end{proof}

For all $\e> 0$ let $L_\e\colon(0,+\infty)\times \mathbb R^d\times \mathbb R^d\to [0,+\infty)$ be Borel functions. For every bounded open interval $I$ of $(0,+\infty)$ we define
$$
\mathcal L_\e(u, I)=\int_I L_\e(t,u(t), u'(t)) dt 
$$
for $u\in W^{1,r}(I;\mathbb R^d)$.
Moreover, we also define 
$$
\mathcal L^\lambda_\e(u)=\int_0^{+\infty} L_\e(t,u(t), u'(t))  e^{-\lambda t}dt 
$$
for $u\in W^{1,r}_\lambda((0,+\infty);\mathbb R^d)$.

\begin{theorem} \label{infinite}
For all $\e> 0$ let $L_\e:(0,+\infty)\times \mathbb R^d\times \mathbb R^d\to [0,+\infty)$ be Borel functions. Assume that

{\rm(i)} there exists a Borel function $L_0\colon(0,+\infty)\times \mathbb R^d\times \mathbb R^d\to [0,+\infty)$  for all bounded open intervals $I$ of $(0,+\infty)$ the functionals $\mathcal L_\e(\cdot, I)$ defined above $\Gamma$-converge, as $\e\to 0^+$, with respect to the weak topology of $W^{1,r}(I;\mathbb R^d)$ to the functional  $\mathcal L_0(\cdot, I)$ defined by
$$
\mathcal L_0(u, I)=\int_I L_0(t,u(t), u'(t)) dt 
$$
for $u\in W^{1,r}(I;\mathbb R^d)$,

{\rm(ii)} for all bounded open intervals $I$ of $(0,+\infty)$ and for all $u\in W^{1,r}(I;\mathbb R^d)$ there exists a sequence $u_\e$ such that $u_\e= u$ at the endpoints of the interval $I$ and $\mathcal L_\e(u_\e, I)$ tends to $\mathcal L_0(u, I)$; 

{\rm(iii)}  there exists a constant $C>0$ such that
\begin{equation}
L_\e(t,0,0)\le C \hbox{ for every }\e>0, \hbox{ and }t>0,
\end{equation}

{\rm(iv)}  $L_0$ is a Carath\'eodory function satisfying
\begin{equation}
L_0(t,x,\xi)\le C (1+|\xi|^r) \hbox{ for every }t>0,\  x,\xi\in\mathbb R^d.
\end{equation}
Then the functionals $\mathcal L^\lambda_\e$ defined above $\Gamma$-converge,  as $\e\to 0^+$, with respect to the weak topology of $W^{1,r}_\lambda((0,+\infty);\mathbb R^d)$ to the functional $\mathcal L^\lambda_0$ defined by
$$
\mathcal L^\lambda_0(u)=\int_0^{+\infty} L_0(t,u(t), u'(t))  e^{-\lambda t}dt 
$$
for $u\in W^{1,r}_\lambda((0,+\infty);\mathbb R^d)$.
\end{theorem}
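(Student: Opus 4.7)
The plan is to establish the $\Gamma$-liminf and $\Gamma$-limsup inequalities separately, using the fact that multiplying by the bounded positive weight $e^{-\lambda t}$ on a bounded interval is compatible with the $\Gamma$-convergence in hypotheses (i) and (ii) via a Riemann-sum approximation, and handling the tail $(T,+\infty)$ via the uniform bound (iii).

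For the $\Gamma$-liminf inequality, let $u_\e \weak u$ in $W^{1,r}_\lambda((0,+\infty);\mathbb R^d)$. Since the weight $e^{-\lambda t}$ is bounded below on any bounded interval, weak convergence in $W^{1,r}_\lambda$ restricts to weak convergence in $W^{1,r}((0,T);\mathbb R^d)$ for every $T>0$. Fixing $T$ and a partition $0 = s_0 < \cdots < s_N = T$, the monotonicity $e^{-\lambda t} \ge e^{-\lambda s_k}$ on $(s_{k-1}, s_k)$ gives
\[
\mathcal L^\lambda_\e(u_\e) \ge \sum_{k=1}^N e^{-\lambda s_k}\,\mathcal L_\e\bigl(u_\e, (s_{k-1}, s_k)\bigr).
\]
Passing to the $\liminf$ and using (i) on each subinterval, then refining the partition (using (iv) to dominate by $C(1+|u'|^r) \in L^1_\lambda$) and letting $T\to+\infty$, yields $\liminf_\e \mathcal L^\lambda_\e(u_\e) \ge \mathcal L^\lambda_0(u)$.

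For the $\Gamma$-limsup inequality I first treat $u$ vanishing outside some interval $(0, T_0)$. Given $M>0$ and a fine partition of $(0, T_0+M)$, I invoke (ii) on each subinterval with target $u$ to obtain sequences with matching endpoint values, concatenate them (the boundary-matching assertion in (ii) is crucial here), and extend by $0$ on $(T_0+M, +\infty)$, which agrees with $u$ there since $u$ vanishes past $T_0$. Bounding $e^{-\lambda t} \le e^{-\lambda s_{k-1}}$ on each $(s_{k-1}, s_k)$ and using (iii) for the tail yields
\[
\limsup_\e \mathcal L^\lambda_\e(v^M_\e) \le \sum_{k=1}^N e^{-\lambda s_{k-1}}\,\mathcal L_0\bigl(u, (s_{k-1}, s_k)\bigr) + \frac{C}{\lambda}\, e^{-\lambda(T_0+M)},
\]
and this converges to $\mathcal L^\lambda_0(u)$ as the mesh shrinks and $M\to+\infty$. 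A standard diagonal argument then produces a single recovery sequence.

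For general $u\in W^{1,r}_\lambda$, Proposition \ref{const-dense} provides an approximating sequence $u_n$ with bounded support converging to $u$ in $\|\cdot\|_\lambda$. Using the Carath\'eodory property and the growth bound in (iv), together with the equi-integrability of $|u_n'|^r e^{-\lambda t}$ furnished by the strong $L^r_\lambda$ convergence of $u_n'$, Vitali's theorem gives $\mathcal L^\lambda_0(u_n) \to \mathcal L^\lambda_0(u)$. Combined with the previous step and the lower semicontinuity of the $\Gamma\text{-}\limsup$ functional with respect to weak $W^{1,r}_\lambda$ convergence, this yields $\Gamma\text{-}\limsup_\e \mathcal L^\lambda_\e(u) \le \mathcal L^\lambda_0(u)$. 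The main obstacle in the argument is reconciling the weighted integral on the half-line with the hypotheses (i) and (ii), which are formulated only for unweighted integrals over bounded intervals; the monotonicity of $t\mapsto e^{-\lambda t}$ together with the Riemann-sum approximation is what bridges the two, and the uniform pointwise bound (iii) is what makes the tail from the bounded-support approximation harmless.
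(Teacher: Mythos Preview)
Your proof is correct and follows essentially the same approach as the paper: the paper isolates your Riemann-sum argument (bounding $e^{-\lambda t}$ above and below by its values at partition points and invoking (i)--(ii) on the subintervals) into a separate preliminary lemma for weighted integrals on bounded intervals, and then proceeds exactly as you do---truncating at some $\tau>T_0$, extending the recovery sequence by $0$, controlling the tail via (iii), and passing to general $u$ by Proposition~\ref{const-dense} together with the strong continuity of $\mathcal L^\lambda_0$ furnished by (iv).
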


Preliminarily to the proof of this results, we state the following lemma

\begin{lemma}\label{lemma33}
Suppose that $L_\e$ and $L_0$  be Borel functions satisfying hypotheses {\rm(i)} and {\rm(ii)} of the previous theorem, and let $\phi:[0,+\infty)\to [0,+\infty)$ be a continuous function.
Given a bounded open interval $I$ of $(0,+\infty)$ we define
$$
\mathcal L^\phi_\e(u, I)=\int_I L_\e(t,u(t), u'(t))\phi(t) dt,\quad \hbox{ and }\quad
\mathcal L^\phi_0(u, I)=\int_I L_0(t,u(t), u'(t))\phi(t) dt,
$$
for $u\in H^1(I;\mathbb R^d)$. Then the functionals $\mathcal L^\phi_\e(\cdot, I)$  $\Gamma$-converge preserving the boundary conditions with respect to the weak topology of $W^{1,r}(I;\mathbb R^d)$ to the functional  $\mathcal L^\phi_0(\cdot, I)$.
\end{lemma}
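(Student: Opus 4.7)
The plan is to reduce the weighted $\Gamma$-convergence to the unweighted hypotheses (i) and (ii) by a Riemann-type partitioning argument that exploits the uniform continuity of $\phi$ on the compact set $\overline I$. Fix $\eta>0$ and choose a subdivision $t_0<t_1<\cdots<t_N$ of $\overline I$ fine enough that the oscillation of $\phi$ on each $I_k:=(t_{k-1},t_k)$ is less than $\eta$; set $\phi_k^-:=\inf_{I_k}\phi$ and $\phi_k^+:=\sup_{I_k}\phi$, so that $0\le \phi_k^+-\phi_k^-<\eta$ uniformly in $k$.

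For the $\Gamma$-liminf inequality, let $u_\e \wto u$ weakly in $W^{1,r}(I;\mathbb R^d)$. Since $L_\e\ge 0$,
$$
\mathcal L^\phi_\e(u_\e,I)=\sum_{k=1}^N\int_{I_k}L_\e(t,u_\e,u_\e')\,\phi(t)\,dt\ge \sum_{k=1}^N \phi_k^-\,\mathcal L_\e(u_\e,I_k).
$$
The restrictions $u_\e|_{I_k}$ still converge weakly to $u|_{I_k}$ in $W^{1,r}(I_k;\mathbb R^d)$, so by hypothesis (i) each term satisfies $\liminf_\e \mathcal L_\e(u_\e,I_k)\ge \mathcal L_0(u,I_k)$. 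Since the sum over $k$ is finite we pass to the liminf and rewrite the bound as the integral of a step function $\phi_\eta\le \phi$ on $I$, obtaining $\liminf_\e\mathcal L^\phi_\e(u_\e,I)\ge \int_I \phi_\eta L_0(t,u,u')\,dt$. As $\eta\to 0$, uniform continuity gives $\phi_\eta\to\phi$ uniformly on $\overline I$; by Fatou's lemma (or dominated convergence when the limit is finite) the right-hand side tends to $\mathcal L^\phi_0(u,I)$, proving the lower bound.

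For the $\Gamma$-limsup with boundary values, fix $u\in W^{1,r}(I;\mathbb R^d)$; we may assume $\mathcal L_0(u,I)<\infty$, since otherwise the limsup inequality is vacuous under the growth assumptions that hold in the intended applications. Apply hypothesis (ii) on each $I_k$ to obtain sequences $u_\e^k\in W^{1,r}(I_k;\mathbb R^d)$ with $u_\e^k(t_{k-1})=u(t_{k-1})$, $u_\e^k(t_k)=u(t_k)$, $u_\e^k\wto u|_{I_k}$ weakly, and $\mathcal L_\e(u_\e^k,I_k)\to \mathcal L_0(u,I_k)$. Glue them into a single $u_\e$ on $I$: the matching endpoint conditions ensure $u_\e\in W^{1,r}(I;\mathbb R^d)$, it weakly converges to $u$, and it coincides with $u$ at the extreme endpoints $t_0$ and $t_N$. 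Bounding $\phi\le \phi_k^+$ on each $I_k$ yields
$$
\mathcal L^\phi_\e(u_\e,I)\le \sum_{k=1}^N \phi_k^+\,\mathcal L_\e(u_\e^k,I_k),
$$
so that $\limsup_\e \mathcal L^\phi_\e(u_\e,I)\le \int_I \phi_\eta^+ L_0(t,u,u')\,dt$, where $\phi_\eta^+\ge \phi$ converges uniformly to $\phi$. Dominated convergence, with dominant $(\max_{\overline I}\phi)\,L_0(\cdot,u,u')\in L^1(I)$, sends this upper bound to $\mathcal L^\phi_0(u,I)$; a standard diagonal extraction along a sequence $\eta_j\to 0$ then produces one recovery sequence with the required properties.

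The main subtlety is that the liminf and limsup steps demand a lower and an upper piecewise-constant replacement of $\phi$, respectively, so both halves of the argument must pay the oscillation cost $\eta$ and absorb it by partition refinement. Both costs are controlled because $\overline I$ is compact and $\phi$ is continuous there. The preservation of boundary data is essentially automatic: it is built into hypothesis (ii) on each $I_k$, and the matching of interior endpoint values $u(t_k)$ is used exclusively to glue the pieces into a globally admissible $W^{1,r}$ function whose values at $t_0$ and $t_N$ coincide with those of $u$.
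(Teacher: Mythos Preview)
Your proof is correct and follows essentially the same route as the paper: both argue by partitioning $I$ into small subintervals, replacing $\phi$ by its infimum/supremum on each piece to reduce to the unweighted hypotheses (i) and (ii), gluing recovery sequences at matched endpoints, and absorbing the oscillation error via the uniform continuity of $\phi$. Your write-up is in fact more detailed than the paper's, making explicit the superadditivity of liminf, the dominated convergence step, and the diagonal extraction for the recovery sequence.
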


\begin{proof} Let $u_\e\to u$; then, by (i) for every subinterval $J\subset I$ we have 
$$
m_J\int_J L_0(t,u(t), u'(t))dt\le \liminf_{\e\to 0^+}\mathcal L^\phi_\e(u_\e, J),
$$
where $m_J:=\inf_J \phi$. By covering $I$ by a finite number of disjoint intervals, of sufficiently small size, and using the uniform continuity of $\phi$, we can deduce the liminf inequality.

Conversely, taking $u_\e$ as in (ii) in $J$ we have 
$$
\limsup_{\e\to 0^+}\mathcal L^\phi_\e(u_\e, J)\le M_J\int_J L_0(t,u(t), u'(t))dt,
$$
where $M_J:=\inf_J \phi$. Since $u_\e=u$ at the endpoints of $J$ we can deduce the limsup inequality for $u$ on $I$, by covering $I$ by a finite number of disjoint intervals, of sufficiently small size, and using the uniform continuity of $\phi$.
\end{proof}

\begin{proof}[Proof of Theorem {\rm\ref{infinite}}]
Let $u_\e$ be a sequence in $W^{1,r}_\lambda((0,+\infty);\mathbb R^d)$ converging weakly to $u$.
Then $u_\e$ converges to $u$ weakly in $W^{1,r}((0,t_0);\mathbb R^d)$ for all $t_0>0$. By the lemma above with $\phi(t)=e^{-\lambda t}$, we obtain
$$
\int_0^{t_0} L_0(t,u(t), u'(t)) e^{-\lambda t}dt  \le  \liminf_{\e\to 0^+}\int_0^{t_0} L_\e(t,u_\e(t), u_\e'(t)) e^{-\lambda t}dt \le   \liminf_{\e\to 0^+}\mathcal L^\lambda_\e(u_\e).
$$
Taking the limit as $t_0\to+\infty$ we obtain 
$$
 \mathcal L^\lambda_0(u)\le   \liminf_{\e\to 0^+}\mathcal L^\lambda_\e(u_\e).
$$

To prove the upper bound we reason by density. To that end, we first note that $\mathcal L^\lambda_0$ is continuous with respect to the strong topology of $W^{1,r}_\lambda((0,+\infty);\mathbb R^d)$, thanks to (iv) and a generalized version of the Dominated Convergence Theorem. Therefore, in view of Proposition \ref{const-dense} it is sufficient to consider  a target function $u\in W^{1,r}_\lambda((0,+\infty);\mathbb R^d)$ such that there exists $t_0$ such that $u(t)=0$ if $t\ge t_0$. For all $\tau>t_0$, by Lemma \ref{lemma33} there exist a sequence $u_\e$ converging weakly in $W^{1,r}((0,\tau);\mathbb R^d)$ to $u$ 
and such that $u_\e(\tau)= u(\tau)=0$, $u_\e$ converges weakly in $W^{1,r}((0,\tau);\mathbb R^d)$,  and 
$$
\lim_{\e\to 0^+} \int_0^{\tau} L_\e(t, u_\e(t), u'_\e(t))e^{-\lambda t}dt= 
\int_0^{\tau} L_0(t, u(t), u'(t))e^{-\lambda t}dt.
$$
We extend $u_\e$ by setting $u_\e(t)=0$ if $t\ge \tau$, and compute
\begin{eqnarray*}
\Gamma\hbox{-}\limsup_{\e\to 0^+} L^\lambda_\e(u)&\le &\limsup_{\e\to 0^+} \mathcal L^\lambda_\e(u_\e) \\
&\le& \int_0^{\tau} L_0(t, u(t), u'(t))e^{-\lambda t}dt
+\limsup_{\e\to 0^+}\int_{\tau}^{+\infty} L_\e(t,0,0)e^{-\lambda t}dt\\
&\le& \mathcal L^\lambda_0(u) +C\int_{\tau}^{+\infty}e^{-\lambda t}dt,
\end{eqnarray*}
where in the last inequality we have used property (iii). By the arbitrariness of $\tau\ge t_0$ we obtain the upper bound.
\end{proof}

\begin{remark}\label{remmm}\rm  
Let $V_{\rm per} \colon\mathbb R^d\to \mathbb R$ be a continuous $1$-periodic function.
We can apply Theorem \ref{infinite} to the sequence $L_\e(t,x,\xi)= |\xi|^2+V_{\rm per}(\frac x\e)$ and to $L_0(t,x,\xi)=f_{\rm hom}(\xi)$, where $ f_{\rm hom}$ is defined in \eqref{fomxi-d}, noting that hypotheses (i) and (ii) are proven to hold in Theorem \ref{hom-th}, which is proved in $(0,1)$ for simplicity of notation, but holds in any bounded interval. As a consequence, if $\lambda>0$ and 
for $u\in H^1_\lambda((0,+\infty);\mathbb R^d)$, we define 
\begin{eqnarray}\label{fae}
&&F^\lambda_\e(u):=\int_0^{+\infty} \Big(|u'(t)|^2+V_{\rm per} \Big(\frac{u(t)}\e\Big)\Big)e^{-\lambda t}\,dt\\
&&F^\lambda_{\rm hom}(u):= \int_0^{+\infty}f_{\rm hom}(u'(t))e^{-\lambda t}\,dt,
\end{eqnarray}
we obtain that $F^\lambda_\e$ $\Gamma$-converge to $F^\lambda_{\rm hom}$ in the weak topology of 
$H^1_\lambda((0,+\infty);\mathbb R^d)$.
\end{remark}

\begin{theorem}\label{main-infinity} 
Let $W\colon\mathbb R^d\to [0,+\infty)$ be a Borel function satisfying the hypotheses of Theorem
{\rm\ref{main-d}}, and let $V_{\rm per}$ be as above. Let $F^\lambda_\e$ be defined by \eqref{fae} and $G^\lambda_\e$ be defined as 
\begin{equation}
G^\lambda_\e(u):=\int_0^{+\infty} \Big(|u'(t)|^2+V_{\rm per} \Big(\frac{u(t)}\e\Big)+W \Big(\frac{u(t)}\e\Big)\Big)e^{-\lambda t}\,dt
\end{equation}
for $u\in H^1_\lambda((0,+\infty);\mathbb R^d)$. Then 
\begin{equation}
\Gamma\hbox{-}\lim_{\e\to 0} G^\lambda_\e =\Gamma\hbox{-}\lim_{\e\to 0} F^\lambda_\e,
\end{equation}
preserving the initial conditions, with respect to the weak topology of $H^1_\lambda((0,+\infty);\mathbb R^d)$. 
\end{theorem}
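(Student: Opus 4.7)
The liminf inequality $\Gamma\text{-}\liminf_{\e\to0}G^\lambda_\e\ge F^\lambda_{\rm hom}$ is immediate from the pointwise bound $G^\lambda_\e\ge F^\lambda_\e$ together with the $\Gamma$-convergence $F^\lambda_\e\to F^\lambda_{\rm hom}$ proved in Remark \ref{remmm}. The content of the theorem is therefore the matching $\Gamma\text{-}\limsup$ inequality, together with the preservation of initial conditions.

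The plan is to obtain this upper bound by applying Theorem \ref{infinite} to the Lagrangians $L^G_\e(t,x,\xi):=|\xi|^2+V_{\rm per}(x/\e)+W(x/\e)$ with candidate limit $L^G_0(t,x,\xi):=f_{\rm hom}(\xi)$; since this produces the same limit $F^\lambda_{\rm hom}$ already identified for $F^\lambda_\e$ in Remark \ref{remmm}, the conclusion will follow. Hypotheses (i) and (ii) of Theorem \ref{infinite} are read off directly from Theorem \ref{main-d}: its bounded-interval $\Gamma$-convergence statement extends from $(0,1)$ to any bounded interval $I\subset(0,+\infty)$ by an affine change of the time variable, and the recovery sequences built in its proof are constructed piece by piece so as to match the target function at all endpoints, as already exploited around \eqref{h-0}. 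Hypothesis (iv) is simply the continuity and quadratic growth of $f_{\rm hom}$ established in Remark \ref{continuity}.

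The delicate point is hypothesis (iii), which requires a uniform bound on $L^G_\e(t,0,0)=V_{\rm per}(0)+W(0)$. The $V_{\rm per}$ part is bounded by $\max V_{\rm per}$, and the $W(0)$ part is automatic when $W$ is bounded, which covers the Hamilton--Jacobi applications of the next section. For general $W\in L^p_{\rm unif}$, the extension-by-zero step in the proof of Theorem \ref{infinite} should be replaced by extension by a small constant $c_\e\to 0$ chosen from the positive-measure set on which $W(c_\e/\e)$ is bounded uniformly in $\e$; such points exist in any shrinking neighbourhood of the origin by \eqref{2-1}. The finite-interval endpoint-preserving recovery sequence is then modified on a short interval near $\tau$ so as to match $c_\e$ rather than $0$ at the right endpoint, at a cost that vanishes by Lemma \ref{lemma5}.

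Preservation of initial conditions comes for free: Proposition \ref{const-dense} reduces the argument to targets $u$ supported in some $[0,\tau]$, and the endpoint-preserving recovery sequences on $[0,\tau]$ supplied by Theorem \ref{main-d} automatically satisfy $u_\e(0)=u(0)$. The main obstacle is the verification of hypothesis (iii) in the unbounded-$W$ regime, but this is a minor technical adjustment that has no bearing on the applications considered in the sequel, where $W$ will always be assumed bounded.
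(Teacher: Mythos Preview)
Your approach is exactly the paper's: apply Theorem \ref{infinite} with $L_\e(t,x,\xi)=|\xi|^2+V_{\rm per}(x/\e)+W(x/\e)$ and $L_0(t,x,\xi)=f_{\rm hom}(\xi)$, invoking Theorem \ref{main-d} for hypotheses (i)--(ii), and then compare with Remark \ref{remmm}. One small correction: your concern about hypothesis (iii) in the unbounded-$W$ case is unfounded, since $L^G_\e(t,0,0)=V_{\rm per}(0)+W(0)$ is a \emph{fixed} finite number (the function $W$ takes values in $[0,+\infty)$, not $[0,+\infty]$), independent of $\e$ and $t$; no modification of the extension-by-zero step is needed.
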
 

\begin{proof}
We can apply Theorem \ref{infinite},  with 
$L_\e(t,x,\xi)= |\xi|^2+V_{\rm per}(\frac x\e)+W(\frac x\e)$ and to $L_0(t,x,\xi)=f_{\rm hom}(\xi)$,
noting that (i) and (ii) hold by Theorem \ref{main-d}.
The conclusion then follows by Remark \ref{remmm}.
\end{proof}

\begin{corollary}\label{cor-con-pu} Under the hypotheses of the previous theorem, we have
$$
\lim_{\e\to 0^+} \inf_{u(0)=x} G^\lambda_\e(u)= \lim_{\e\to 0^+} \inf_{u(0)=x} F^\lambda_\e(u)= \min_{u(0)=x}F^\lambda_{\rm hom}(u).
$$
\end{corollary}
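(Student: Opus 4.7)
The plan is to deduce the corollary from Theorem \ref{main-infinity} and Remark \ref{remmm} by applying the Fundamental Theorem of $\Gamma$-convergence to the constrained minimization on the affine set $\{u\in H^1_\lambda((0,+\infty);\mathbb R^d):u(0)=x\}$. The two key ingredients are the fact that the $\Gamma$-convergence in Theorem \ref{main-infinity} preserves initial conditions, and equi-coercivity in the space $H^1_\lambda$.

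First I would show that $\min_{u(0)=x}F^\lambda_{\rm hom}(u)$ is attained. Using the bound $|\xi|^2+\min V_{\rm per}\le f_{\rm hom}(\xi)\le |\xi|^2+\max V_{\rm per}$ from Remark \ref{continuity}, the functional $F^\lambda_{\rm hom}$ is convex and weakly lower semicontinuous in $H^1_\lambda$. Any sub-level set intersected with $\{u(0)=x\}$ is bounded in $H^1_\lambda$, since $|u(0)|=|x|$ is fixed and $\int_0^{+\infty}|u'|^2 e^{-\lambda t}\,dt$ is controlled by the energy; the direct method then yields a minimizer $u^*$.

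For the $\limsup$ inequality, I would feed $u^*$ into Theorem \ref{main-infinity}: the $\Gamma$-convergence preserving initial conditions provides a sequence $u_\e\wto u^*$ in $H^1_\lambda$ with $u_\e(0)=x$ and $G^\lambda_\e(u_\e)\to F^\lambda_{\rm hom}(u^*)=\min_{u(0)=x}F^\lambda_{\rm hom}$, which yields $\limsup_\e\inf_{u(0)=x}G^\lambda_\e\le\min_{u(0)=x}F^\lambda_{\rm hom}$. For the $\liminf$ inequality, I would select $u_\e$ with $u_\e(0)=x$ and $G^\lambda_\e(u_\e)\le\inf_{u(0)=x}G^\lambda_\e+\e$; using $V_{\rm per}\ge\min V_{\rm per}$ and $W\ge 0$ gives a uniform bound on $\int_0^{+\infty}|u'_\e|^2 e^{-\lambda t}\,dt$, hence on $\|u_\e\|_\lambda$. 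Extracting a weakly convergent subsequence $u_\e\wto u$, the boundedness of the evaluation map $u\mapsto u(0)$ on $H^1_\lambda$ (which follows directly from the definition of the norm) forces $u(0)=x$, and the $\Gamma$-$\liminf$ inequality gives $\min_{u(0)=x}F^\lambda_{\rm hom}\le F^\lambda_{\rm hom}(u)\le\liminf_\e G^\lambda_\e(u_\e)$. Replacing $G^\lambda_\e$ by $F^\lambda_\e$ and Theorem \ref{main-infinity} by Remark \ref{remmm} establishes the middle equality by the same argument.

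The one technical point worth verifying, though rather mild, is the equi-coercivity in $H^1_\lambda$: one must check that the fixed value $u_\e(0)=x$, combined with the coercive bound on the weighted $L^2$-norm of $u'_\e$ extracted from the energy, yields boundedness of the full $\|\cdot\|_\lambda$-norm, and that the weak limit inherits the initial condition by continuity of the evaluation at $0$. All other steps are standard applications of the Fundamental Theorem of $\Gamma$-convergence in the presence of a closed affine constraint preserved by recovery sequences.
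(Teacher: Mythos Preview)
Your proposal is correct and follows essentially the same route as the paper: the Fundamental Theorem of $\Gamma$-convergence applied on the closed affine set $\{u(0)=x\}$, using the recovery sequences preserving initial conditions for the $\limsup$ and equi-coercivity in $H^1_\lambda$ plus the $\Gamma$-$\liminf$ inequality for the $\liminf$. The only cosmetic difference is that you first prove existence of the minimizer of $F^\lambda_{\rm hom}$ by the direct method and then use it in the $\limsup$ step, whereas the paper takes the infimum over all $u$ in the $\limsup$ step and obtains the minimizer as a by-product of the $\liminf$ argument.
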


\begin{proof} For all $u\in H^1_\lambda((0,+\infty);\mathbb R^d)$ with $u(0)=x$ by Theorem \ref{main-infinity} there exists a sequence $u_\e\to u$ weakly in $H^1_\lambda((0,+\infty);\mathbb R^d)$ such that $u_\e(0)=x$ and $G^\lambda_\e(u_\e)\to F^\lambda_{\rm hom}(u)$. This implies that 
$$
\limsup_{\e\to 0^+}  \inf_{v(0)=x} G^\lambda_\e(v)\le \limsup_{\e\to 0^+} G^\lambda_\e(u_\e)= F^\lambda_{\rm hom}(u).
$$
Taking the infimum with respect to such $u$ we obtain 
$$
\limsup_{\e\to 0^+}  \inf_{v(0)=x} G^\lambda_\e(v)\le  \inf_{u(0)=x} F^\lambda_{\rm hom}(u)<+\infty.
$$
Conversely, we consider a sequence $\e_k\to 0$ such that
$$
\lim_{k\to+\infty}  \inf_{u(0)=x} F^\lambda_{\e_k}(u)= \liminf_{\e\to 0^+}  \inf_{u(0)=x} F^\lambda_{\e}(u)\le \limsup_{\e\to 0^+}  \inf_{u(0)=x} G^\lambda_\e(u)<+\infty,
$$
and correspondingly a sequence $u_k$ with $u_k(0)=x$ such that 
$$
\lim_{k\to+\infty}  F^\lambda_{\e_k}(u_k)= \liminf_{\e\to 0^+}  \inf_{u(0)=x} F^\lambda_{\e}(u).
$$
Since we have $ F^\lambda_{\e_k}(u_k)\ge \|u_k\|_\lambda^2-|x|^2$ the sequence $u_k$ is bounded in 
$H^1_\lambda((0,+\infty);\mathbb R^d)$; hence, up to subsequences $u_k$ converges to a function $u$
with $u(0)=x$ weakly in $H^1_\lambda((0,+\infty);\mathbb R^d)$. By the liminf inequality
\begin{eqnarray*}
\inf_{v(0)=x} F^\lambda_{\rm hom}(v)&\le& F^\lambda_{\rm hom}(u)\le \lim_{k\to+\infty}  F^\lambda_{\e_k}(u_k)= \liminf_{\e\to 0^+}  \inf_{v(0)=x} F^\lambda_{\e}(v)\\
&\le &\limsup_{\e\to 0^+}  \inf_{v(0)=x} F^\lambda_{\e}(v)\le \limsup_{\e\to 0^+}  \inf_{v(0)=x} G^\lambda_\e(v)\le  \inf_{v(0)=x} F^\lambda_{\rm hom}(v).
\end{eqnarray*}
This proves that $u$ is a minimizer of $F^\lambda_{\rm hom}$ with the initial condition $u(0)=x$,
and the convergence of $ \inf\limits_{v(0)=x} F^\lambda_{\e}$. The same argument proves the  convergence of $ \inf\limits_{v(0)=x} G^\lambda_{\e}$.
\end{proof}

\section{Stability for Hamilton--Jacobi equations}\label{stab-HJ}
In this section we use the $\Gamma$-convergence approach to recover some stability results  obtained using PDE techniques and presented by P.-L.~Lions in his lectures \cite{PLL-college}. Although our results require specific hypotheses on the periodic Hamiltonian, our assumptions on the non-negative perturbation $W$ are much weaker (see \eqref{2-2}) than those considered in  \cite{PLL-college}.

Using the notation of Section \ref{High-case-sect} we consider continuous and periodic $V_{\rm per}$.
In this section we suppose that the perturbation $W$ satisfies \eqref{2-2} and
\begin{equation}\label{WBUC}
\hbox{ $W$ is bounded and uniformly continuous on $\mathbb R^d$,}
\end{equation}
and use the notation 
%
\begin{eqnarray}\label{Lagrangians}
&L_{\rm per}(x,\xi)= |\xi|^2+ V_{\rm per}(x),&
L(x,\xi)= |\xi|^2+ V_{\rm per}(x) + W(x),
\\
\label{Hamiltonians} 
&H_{\rm per}(x,\xi)= \tfrac14|\xi|^2- V_{\rm per}(x),&
H(x,\xi)= \tfrac14|\xi|^2- V_{\rm per}(x) - W(x),
\\
&L_{\rm hom}(\xi)= f_{\rm hom}(\xi),
&H_{\rm hom}(\xi)
= f^*_{\rm hom}(\xi),
\end{eqnarray}
where $*$ denotes the Fenchel conjugate.

Analogous results can be obtained in the case of more general Lagrangians as in Section \ref{sec:ext} and the related Hamiltonians.

\subsection{Steady-state Hamilton--Jacobi equations}

We fix $\lambda>0$. We observe that, thanks to \eqref{WBUC},  for every $\e>0$ there exists a unique viscosity solution $U_\e\in W^{1,\infty}(\mathbb R^d)$ of the Hamilton-Jacobi equation 
\begin{equation}\label{H-prob-inf}
\lambda U_\e(x) + H\big(\frac{x}\e, \nabla U_\e(x)\big)=0,
\end{equation}
and likewise  there exists a unique viscosity solution $U\in W^{1,\infty}(\mathbb R^d)$ of the Hamilton-Jacobi equation 
\begin{equation}\label{H-hom-prob-inf}
\lambda U(x) + H_{\rm hom}(\nabla U(x))=0.
\end{equation}
The existence is proved as a particular case of \cite[Theorem 2.1]{Lions} and the uniqueness can be deduced from the example after \cite[Remark 1.15]{Lions}. 
In the unperturbed case, when $W=0$, equation \eqref{H-prob-inf} reduces to 
\begin{equation}\label{H-prob-per}
\lambda U_\e(x) + H_{\rm per}\big(\frac{x}\e, \nabla U_\e(x)\big)=0.
\end{equation}
The convergence of the solutions of \eqref{H-prob-per} to the solution of \eqref{H-hom-prob-inf} can be obtained using the techniques introduced in the fundamental unpublished paper by Lions, Papanicolaou, and Varadhan \cite{LPV} (see also \cite{MR1159184,MR1871349}).

We prove the following stability result.

\begin{theorem}[Stability for steady-state Hamilton--Jacobi equations]
Let $V_{\rm per} \colon\mathbb R^d\to\mathbb R$ be a continuous $1$-periodic function, and let 
$W \colon\mathbb R^d\to\mathbb R$ be a bounded and uniformly continuous function satisfying \eqref{2-2}. With fixed $\lambda >0$, for every $\e>0$ let $U_\e\in  W^{1,\infty}(\mathbb R^d)$ be the unique viscosity solution of \eqref{H-prob-inf}
and let $U\in  W^{1,\infty}(\mathbb R^d)$ be the unique viscosity solution of \eqref{H-hom-prob-inf}.
Then $U_\e$ tends to $U$ uniformly on compact sets of $\mathbb R^d$.
\end{theorem}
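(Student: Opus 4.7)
The plan is to identify both $U_\e$ and $U$ as value functions of infinite-horizon discounted variational problems whose Lagrangians are precisely $L(x,\xi)=|\xi|^2+V_{\rm per}(x)+W(x)$ and $L_{\rm hom}(\xi)=f_{\rm hom}(\xi)$, and then apply Corollary \ref{cor-con-pu} to deduce pointwise convergence of minima, upgrading this to local uniform convergence via an equicontinuity argument.

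First, I would recall the Lax--Hopf type representation formula for viscosity solutions of discounted Hamilton--Jacobi equations with convex Hamiltonians. Since $H(x,\xi)=\tfrac14|\xi|^2-V_{\rm per}(x)-W(x)$ is convex and coercive in $\xi$, its Fenchel conjugate in $\xi$ is $L(x,\xi)=|\xi|^2+V_{\rm per}(x)+W(x)$, and standard optimal-control arguments (see, e.g., \cite{Lions}) give, for every $x\in\mathbb R^d$,
\begin{equation*}
U_\e(x)=\inf_{u(0)=x}\int_0^{+\infty}\Big(|u'(t)|^2+V_{\rm per}\Big(\frac{u(t)}\e\Big)+W\Big(\frac{u(t)}\e\Big)\Big)e^{-\lambda t}\,dt=\inf_{u(0)=x}G^\lambda_\e(u),
\end{equation*}
where the infimum is taken over $u\in H^1_\lambda((0,+\infty);\mathbb R^d)$. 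Analogously, since $H_{\rm hom}=f^*_{\rm hom}$ with $f_{\rm hom}$ convex and satisfying the growth condition of Remark \ref{continuity}, the homogenized solution $U$ admits the representation $U(x)=\min_{u(0)=x}F^\lambda_{\rm hom}(u)$. A short verification is needed to check that the $H^1_\lambda$ class is the natural admissible class for these control problems, which follows from the quadratic growth of $L$ and $L_{\rm hom}$ together with the exponential weight $e^{-\lambda t}$.

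Second, I would apply Corollary \ref{cor-con-pu} directly: for every fixed $x\in\mathbb R^d$,
\begin{equation*}
\lim_{\e\to 0^+}U_\e(x)=\lim_{\e\to 0^+}\inf_{u(0)=x}G^\lambda_\e(u)=\min_{u(0)=x}F^\lambda_{\rm hom}(u)=U(x),
\end{equation*}
which gives pointwise convergence on $\mathbb R^d$. Note that the hypotheses on $W$ in Theorem \ref{main-d} are guaranteed by the assumptions, since boundedness of $W$ implies \eqref{2-1} for all $p>\tfrac d2$.

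Third, to pass from pointwise convergence to uniform convergence on compacta, I would invoke a standard equicontinuity/equiboundedness estimate for the family $\{U_\e\}$. Since $V_{\rm per}$ and $W$ are bounded, the comparison principle (applied with constant sub- and supersolutions) gives a uniform bound $\|U_\e\|_\infty\le C/\lambda$, and coercivity of $H$ in $\xi$ together with this $L^\infty$ bound yields a uniform Lipschitz estimate $\|\nabla U_\e\|_\infty\le C'$ on $\mathbb R^d$; both facts are classical for discounted Hamilton--Jacobi equations with convex coercive Hamiltonians (see \cite{Lions}). The Ascoli--Arzelà theorem then guarantees that every subsequence of $\{U_\e\}$ has a further subsequence converging locally uniformly, and the limit must coincide with $U$ by the already established pointwise convergence. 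Hence the whole family $U_\e$ converges to $U$ uniformly on compact subsets of $\mathbb R^d$.

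The main obstacle I foresee is not conceptual but bookkeeping: carefully justifying the value-function representation in the class $H^1_\lambda$ (as opposed to Lipschitz curves, which is the more common setting) so that Corollary \ref{cor-con-pu} can be applied without changing the admissible class. This is a routine equivalence thanks to the quadratic growth of $L$ and the exponential discount, but it requires checking that restricting to $H^1_\lambda$ curves does not lower the infimum in the control representation of $U_\e$ and $U$.
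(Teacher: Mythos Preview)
Your proposal is correct and follows essentially the same route as the paper: represent $U_\e$ and $U$ as value functions of the discounted variational problems, invoke Corollary~\ref{cor-con-pu} for pointwise convergence, and upgrade to local uniform convergence via the comparison principle (constant sub/supersolutions) plus the equation itself for a uniform Lipschitz bound, concluding with Ascoli--Arzel\`a. The only cosmetic difference is that the paper cites \cite{MR1484411} for the representation formula and makes the gradient bound explicit by reading it directly off the equation $\tfrac14|\nabla U_\e|^2=-\lambda U_\e+V_{\rm per}+W$.
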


\begin{proof} By a classical result, the solutions $U_\e$ and $U$ are given by
\begin{eqnarray}
&\displaystyle U_\e(x)=\min\Big\{\int_0^{+\infty} L\Big(\frac{u(t)}\e,u'(t)\Big)e^{-\lambda t}dt: 
u\in H^1_\lambda((0,+\infty);\mathbb R^d),\ u(0)=x\Big\},
\\
&\displaystyle U(x)=\min\Big\{\int_0^{+\infty} L_{\rm hom}(u'(t))e^{-\lambda t}dt: 
u\in H^1_\lambda((0,+\infty);\mathbb R^d),\ u(0)=x\Big\}.
\end{eqnarray}
For a proof we refer to \cite[Chapter III, Proposition 2.8]{MR1484411} (see also \cite{MR0882926,MR2360607}).
Corollary \ref{cor-con-pu} gives the pointwise convergence of $U_\e(x)$ to $U(x)$.
In order to prove the uniform convergence on compact sets it suffices to show a uniform bound for the solutions in $W^{1,\infty}(\mathbb R^d)$. First, as $U_\e$ is concerned we note that, since $\frac1\lambda\inf( V_{\rm per}+W)$ and $\frac1\lambda\sup( V_{\rm per}+W)$ are a viscosity subsolution and a viscosity supersolution of \eqref{H-prob-inf}, respectively, by the comparison principle (see for instance \cite[Chapter II, Theorem 3.5]{MR1484411}) we have 
$$
\inf( V_{\rm per}+W)\le \lambda U_\e(x) \le \sup( V_{\rm per}+W)
$$
for every $x\in\mathbb R^d$. From this estimate we obtain a uniform bound for $U_\e$ and by coerciveness thus for $\nabla U_\e$. Indeed, from equation \eqref{H-prob-inf} we have 
$$
\frac14|\nabla U_\e(x)|^2 +\lambda U_\e(x)= V_{\rm per} \Big(\frac{x}\e\Big)+W \Big(\frac{x}\e\Big)
$$
and hence obtain a bound for $|\nabla U_\e(x)|$ uniform with respect to $\e$ and $x$.
The uniform convergence on compact sets follows from Ascoli--Arzel\`a's theorem.
\end{proof}

\subsection{Time-dependent Hamilton--Jacobi equations}
In this section $\Phi$ will be a fixed bounded uniformly continuous function, and $\nabla$ will denote the gradient with respect to $x$.
It is known that the Cauchy problem for the evolution equations on $\mathbb R^d\times [0,+\infty)$ given by
\begin{equation}\label{H-hom-probe}
\begin{cases}\partial_t U_\e(x,t) + H\big(\frac{x}\e, \nabla U_\e(x,t)\big)=0,\\
U_\e(x,0)= \Phi(x)
\end{cases}
\end{equation}
and 
\begin{equation}\label{H-hom-prob}
\begin{cases}\partial_t U(x,t) + H_{\rm hom}(\nabla U(x,t))=0,\\
U(x,0)= \Phi(x).
\end{cases}
\end{equation}
admit a unique viscosity solution  (see \cite[Theorem 9.1]{Lions} and \cite[Chapter 10]{Evans}). In  \cite{LPV} it is proved that when $W=0$ the solutions of 
\begin{equation}
\begin{cases}\partial_t U_\e(x,t) + H_{\rm per}\big(\frac{x}\e, \nabla U_\e(x,t)\big)=0,\\
U_\e(x,0)= \Phi(x)
\end{cases}
\end{equation}
converge uniformly to the viscosity solution of the homogenized equation \eqref{H-hom-prob}.
We now derive a stability result, showing that the same result holds also for the viscosity solutions corresponding to $H$.

\begin{theorem}[Stability for evolutionary Hamilton--Jacobi equations]
Let $V_{\rm per} \colon\mathbb R^d\to\mathbb R$ be a continuous $1$-periodic function, and let 
$W \colon\mathbb R^d\to\mathbb R$ be a bounded and uniformly continuous function satisfying \eqref{2-2}. Let $H$ be given by \eqref{Hamiltonians} and let $\Phi\colon\mathbb R^d\to \mathbb R $ be a bounded and uniformly continuous function. For every $\e>0$ let $U_\e$ be the viscosity solution of \eqref{H-hom-probe}
and let $U$ be the viscosity solution of \eqref{H-hom-prob}. Then $U_\e$ tends to $U$ uniformly on compact sets of $\mathbb R^d\times[0,+\infty)$.
\end{theorem}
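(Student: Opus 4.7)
The plan is to mirror the steady-state argument from the previous theorem, replacing the infinite-horizon discounted value functions with finite-horizon Lax--Hopf value functions, and to extract the uniform convergence on compact sets from pointwise convergence plus a uniform equicontinuity bound.

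First, I would invoke the classical representation formula for viscosity solutions of convex Hamilton--Jacobi equations (see e.g.\ \cite{MR1484411}) to write, for every $(x,t)\in\mathbb R^d\times(0,+\infty)$,
\begin{align*}
U_\e(x,t)&=\inf\Big\{\int_0^t L\Big(\tfrac{\gamma(s)}{\e},\gamma'(s)\Big)ds+\Phi(\gamma(0)):\gamma\in H^1((0,t);\mathbb R^d),\ \gamma(t)=x\Big\},\\
U(x,t)&=\inf\Big\{\int_0^t L_{\rm hom}(\gamma'(s))ds+\Phi(\gamma(0)):\gamma\in H^1((0,t);\mathbb R^d),\ \gamma(t)=x\Big\}.
\end{align*}
Here $L=|\xi|^2+V_{\rm per}+W$ is the Lagrangian corresponding to $H$, and $L_{\rm hom}=f_{\rm hom}$ is its homogenized counterpart (the Fenchel conjugate of $H_{\rm hom}$).

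Next, I would fix $(x,t)$ and apply Theorem~\ref{main-d} on the interval $(0,t)$ (the statement is written on $(0,1)$ but extends verbatim to any bounded interval) to conclude that the sequence of functionals $\gamma\mapsto\int_0^t L(\gamma(s)/\e,\gamma'(s))\,ds$ $\Gamma$-converges, with respect to the weak $H^1$ topology and preserving endpoint values, to $\gamma\mapsto\int_0^t L_{\rm hom}(\gamma'(s))\,ds$. Since the evaluation map $\gamma\mapsto\gamma(0)$ is continuous from the weak $H^1$ topology into $\mathbb R^d$ and $\Phi$ is continuous, the additional term $\Phi(\gamma(0))$ is a continuous perturbation, hence is stable under $\Gamma$-convergence. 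Constraining to $\gamma(t)=x$ is admissible because recovery sequences can be chosen with the matching endpoint value. The quadratic growth of $L$ and the boundedness of $\Phi$ give equicoercivity, so the Fundamental Theorem of $\Gamma$-convergence yields the convergence of minima, i.e.\ $U_\e(x,t)\to U(x,t)$ pointwise.

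Finally, to upgrade to uniform convergence on compact sets of $\mathbb R^d\times[0,+\infty)$, I would derive uniform bounds on $U_\e$ and its space-time modulus of continuity that are independent of $\e$: the sup bound follows from comparing with the constant sub- and supersolutions determined by $\inf\Phi$, $\sup\Phi$, $\sup(V_{\rm per}+W)$, and the inequalities on $\nabla U_\e$ follow from the standard coercivity argument (the equation $\partial_t U_\e+\tfrac14|\nabla U_\e|^2=V_{\rm per}(x/\e)+W(x/\e)$ together with a Lipschitz estimate for $\Phi$ propagates via the comparison principle to a uniform Lipschitz bound in $x$, and then the equation yields a uniform Lipschitz bound in $t$). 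Ascoli--Arzel\`a then upgrades pointwise convergence on a dense set to uniform convergence on compact subsets.

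The main obstacle I anticipate is ensuring that the $\Gamma$-convergence statement of Theorem~\ref{main-d} is available precisely in the form needed here: applied on an arbitrary interval $(0,t)$, with the one-sided endpoint constraint $\gamma(t)=x$ preserved by recovery sequences, and combined with the boundary term $\Phi(\gamma(0))$. All three points are already covered by the proof of Theorem~\ref{main-d} (which constructs recovery sequences attaining the correct values at both endpoints and is clearly interval-independent), so this is a matter of careful book-keeping rather than a new argument. The uniform Lipschitz bound on $U_\e$, which is what drives the passage from pointwise to uniform convergence, is standard and ultimately rests on the uniform continuity of $\Phi$, $V_{\rm per}$, and $W$ assumed in the statement.
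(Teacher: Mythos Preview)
Your pointwise-convergence argument via $\Gamma$-convergence and the Lax formula is correct and matches the paper's starting point. The paper, however, organizes the passage to uniform convergence differently: it introduces the two-endpoint value function
\[
S_\e(y,x,t)=\min\Bigl\{\int_0^t L\bigl(\tfrac{u}{\e},u'\bigr)\,d\tau:\ u(0)=y,\ u(t)=x\Bigr\},
\]
writes $U_\e(x,t)=\inf_y\{S_\e(y,x,t)+\Phi(y)\}$, and proves equicontinuity of $S_\e$ on compact subsets of $\mathbb R^d\times\mathbb R^d\times(0,+\infty)$ by direct variational manipulations (extending competitors by constants or affine pieces, rescaling time). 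The $\Gamma$-convergence then gives $S_\e\to S_{\rm hom}$ uniformly on compact sets, and the infimum over $y$ (which stays in a bounded region by a coercivity estimate) transfers this to $U_\e$. This route is entirely self-contained within the variational framework and never invokes PDE regularity for $U_\e$.

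Your route, by contrast, outsources the equicontinuity of $U_\e$ to comparison-principle arguments. That is legitimate in principle, but there is a genuine gap as written: you invoke ``a Lipschitz estimate for $\Phi$'', whereas the hypothesis only gives $\Phi$ bounded and uniformly continuous. Without Lipschitz initial data the standard propagation argument yields a uniform \emph{modulus of continuity} for $U_\e$ in $x$ (inherited from that of $\Phi$), not a Lipschitz bound; the subsequent bound on $\partial_t U_\e$ from the equation then also degrades. This is still enough for Ascoli--Arzel\`a, but you must state and use the modulus-of-continuity version, and you should also address uniformity as $t\to 0^+$ (the paper singles this out as a separate claim). The paper's $S_\e$-based argument sidesteps this issue because the equicontinuity estimates for $S_\e$ are obtained without any regularity of $\Phi$ beyond boundedness.
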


\begin{proof}
To prove this result we use the characterization of viscosity solutions using the Lax formula (see for instance \cite{MR926208}): for every $x\in\mathbb R^d$ and $t>0$ we have
\begin{equation}\label{Lax-4}
U_\e(x,t)=\min\Bigl\{ \int_0^t L\Big(\frac{u(\tau)}\e, u'(\tau)\Big)d\tau +\Phi(u(0)): u\in H^1((0,t);\mathbb R^d), u(t)=x\Bigr\},
\end{equation}
and
\begin{eqnarray}\nonumber
U(x,t)&=&\min\Bigl\{ \int_0^t L_{\rm hom}(u'(\tau))d\tau +\Phi(u(0)): u\in H^1((0,t);\mathbb R^d), u(t)=x\Bigr\}\\ \label{Lax-3}
&=&\min\Bigl\{ t\, L_{\rm hom}\Big(\frac{x-y}t\Big) +\Phi(y): y\in\mathbb R^d\Bigr\}.
\end{eqnarray}

To prove the result it is enough to show that for all  $x_\e\to x_0$ we have 
\begin{eqnarray}\label{Lax-1}
\lim_{\e\to 0^+}U_\e(x_\e,t_\e)= U(x_0,t_0)\hbox{ if } t_\e\to t_0>0,\\ \label{Lax-2}
\lim_{\e\to 0^+}U_\e(x_\e,t_\e)= \Phi(x_0) \hbox{ if } t_\e\to 0,\hbox{ with } t_\e>0.
\end{eqnarray}
To prove \eqref{Lax-1}, we write \eqref{Lax-4} in the form 
\begin{equation}
U_\e(x,t)= \inf\big\{ S_\e(x,t,y)+ \Phi(y): y\in\mathbb R^d\big\},
\end{equation}
where
$$
S_\e(y, x,t)=\min\Bigl\{ \int_0^t L\Big(\frac{u(\tau)}\e, u'(\tau)\Big)d\tau : u\in H^1((0,t);\mathbb R^d), u(0)=y, u(t)=x\Bigr\}.
$$
We fix $x_\e\to x_0$  and $t_\e\to t_0>0$. We claim that for fixed $y$ we have 
$$\lim_{\e\to 0^+}S_\e(y,x_\e,t_\e)= S_{\hom} (y,x_0,t_0),
$$
where
\begin{eqnarray*}
S_{\rm hom}(y, x,t)&=&\min\Bigl\{ \int_0^t L_{\rm hom}(u'(\tau))d\tau : u\in H^1((0,t);\mathbb R^d), u(0)=y, u(t)=x\Bigr\}\\
&=& t\, L_{\rm hom}\Big(\frac{x-y}{t}\Big).
\end{eqnarray*}
Note that 
\begin{equation}
U(x,t)= \inf\big\{ S_{\rm hom}(x,t,y)+ \varphi(y): y\in\mathbb R^d\big\},
\end{equation}

We first prove some equi-continuity estimates for $S_\e$, uniform with respect to $\e$. 
With fixed $x,y$, we examine $S_\e(y,x,\cdot)$. If $t_1<t_2$ we have 
\begin{equation}\label{dista-2}
S_\e(y,x,t_2)\le S_\e(y,x, t_1)+ M(t_2-t_1),
\end{equation}
where $M=\max(V_{\rm per}+W)$. This is obtained by extending test functions to the constant value $x$ in $(t_1,t_2)$. Conversely, noting that 
$$
\int_0^{t_2} L\Big(\frac{u(\tau)}\e, u'(\tau)\Big)d\tau= \frac{t_2}{t_2}\int_0^{t_1} L\Big(\frac{v(\sigma)}\e, \frac{t_1}{t_2}v'(\sigma)\Big)d\sigma,
$$
where $u$ is a minimizer for $S_\e(y,x, t_2)$, and $v(\sigma)= u (\frac{t_2}{t_1}\sigma )$, and that
$$
\Big|L\Big(x, \frac{t_2}{t_1}\xi\Big)-L\Big(x, \xi\Big)\Big|= \Big(\Big(\frac{t_2}{t_1}\Big)^2-1\Big) |\xi|^2,
$$
we obtain
\begin{eqnarray}\label{dista-1}\nonumber
S_\e(y,x,t_1)&\le& S_\e(y,x, t_2)+ \Big(\Big(\frac{t_2}{t_1}\Big)^2-1\Big)\int_0^{t_2} |u'(\tau)|^2d\tau\\
&\le&S_\e(y,x, t_2)+ \Big(\Big(\frac{t_2}{t_1}\Big)^2-1\Big) \Big(Mt_2+\frac{|x-y|^2}{t_2}\Big).
\end{eqnarray}
 We finally deduce that, if $0<a\le t_1\le t_2\le b$ and $x,y\in B_R$ then
\begin{equation}\label{dista-5}
 |S_\e(y,x,t_1)- S_\e(y,x, t_2)|\le C(a,b,R) (t_2-t_1).
\end{equation}

As for the properties of $S_\e(y,\cdot,\cdot)$, for given $x_1,x_2\in B_R$, $0<a\le t_1\le t_2\le b$, and $\delta>0$, we first have 
\begin{equation}\label{dista-3}
S_\e(y,x_2,t_2+\delta)\le S_\e(y,x_1, t_1)+ M(t_2-t_1+\delta) +\frac{|x_2-x_1|^2}{t_2-t_1+\delta},
\end{equation}
obtained by extending test functions by an affine function in $(t_1,t_2)$. 
Using \eqref{dista-5}, we obtain 
\begin{equation}\label{dista-3}
S_\e(y,x_2,t_2)
\le S_\e(y,x_1, t_1)+ M(t_2-t_1+\delta) +\frac{|x_2-x_1|^2}{\delta}+ C(a,b,R) \delta,
\end{equation}

Conversely, we have, using \eqref{dista-5} in the first inequality and then
 \eqref{dista-3} with $t_1$ and $t_2$ replaced by $t_2$ and $t_2+(t_2-t_1)$, respectively,
 and $x_1$ and $x_2$ interchanged, 
\begin{eqnarray}\label{dista-4}\nonumber
S_\e(y,x_1,t_1)&\le& S_\e(y,x_1,t_2+(t_2-t_1))+2C(a,b,R) (t_2-t_1)\\
&\le&\nonumber
S_\e(y,x_2, t_2)+ M(t_2-t_1+\delta) +2R\frac{|x_2-x_1|}{\delta}+2C(a,b,R) (t_2-t_1).
\end{eqnarray}
Together with \eqref{dista-3}, this gives
\begin{eqnarray}\label{dista-6}\nonumber
|S_\e(y,x_1,t_1)-
S_\e(y,x_2, t_2)|\le M(t_2-t_1+\delta) +2R\frac{|x_2-x_1|}{\delta}+2C(a,b,R) (t_2-t_1+\delta).
\end{eqnarray}
If $|t_2-t_1|\le \delta$ and $|x_2-x_1|<\delta^2$ then  we have
$$
|S_\e(y,x_1,t_1)-
S_\e(y,x_2, t_2)|\le 2\big(M+R+2C(a,b,R)\big)\delta,
$$
which shows that $S_\e(y,\cdot,\cdot)$ are uniformly equicontinuous on compact subsets of $\mathbb R^d\times (0,+\infty)$. Finally, noting that a change of variables $\tau=t-\sigma$ interchanges symmetrically the role of $x$ and $y$ in the definition of $S_\e$, we infer that such functions are indeed 
uniformly equicontinuous on compact subsets of $\mathbb R^d\times\mathbb R^d\times (0,+\infty)$.

By the equicoerciveness and $\Gamma$-convergence with given boundary conditions, we have that $S_\e(y,x,t)$ converge to $S_{\rm hom}(y,x,t)$ for every $(y,x,t)$, and the uniformly equicontinuity just proven shows that this limit is uniform on compact subsets of $\mathbb R^d\times (0,+\infty)$. Recalling definition \eqref{Lax-3}, and noting that for given $x$ minimizers $y$ satisfy $|x-y|^2\le t^2M$ by Jensen's inequality and estimating $U_\e(x,t)$ by $S_\e(x,x,t)$, we  then deduce that $U_\e(x,t)$ converges uniformly to $U(x,t)$. 
%
%
%
%
%
\end{proof}

\section{Negative perturbations}\label{nega}
In this section we give an example of a negative perturbation whose presence affects the form of the $\Gamma$-limit
in the spirit of \cite{MR4643677}, where a more general form of Hamiltonian (in particular not ``separable'', even in the perturbation) is considered.

We examine functionals 
$$
G_\e (u)=\int_0^1 \Big(|u'(t)|^2+ W\Big(\frac{u(t)}\e\Big)\Big)dt,
$$
defined in $H^1((0,1);\mathbb R^d)$,
where $W$ can be considered as a perturbation of the trivial periodic potential $V_{\rm per} =0$.

\begin{theorem} Let $ W\colon\mathbb R^d\to \mathbb R$ satisfy
$$
W(x)\le 0\hbox{ for all $x\in\mathbb R^d$} \quad\hbox{  and }\ 
\lim_{|x|\to+\infty} W(x)=0.
$$
Then 
$$
\Gamma\hbox{-}\lim_{\e\to 0} G_\e (u)=\int_0^1|u'(t)|^2\,dt +\inf W\,|\{t: u(t)=0\}|.
$$
\end{theorem}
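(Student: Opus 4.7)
Set $A=\{t\in[0,1]: u(t)=0\}$; since $u\in H^1((0,1);\mathbb R^d)$ is continuous, $A$ is closed. The underlying mechanism of the proof is twofold: wherever $u(t)\neq 0$, the arguments $u_\e(t)/\e$ diverge to infinity, so $W(u_\e/\e)\to 0$ by the decay hypothesis and nothing survives from $A^c$; on $A$, a constant shift of $u$ at scale $\e$ is free in the kinetic part, and by the non-positivity of $W$ it is optimal to place $u_\e/\e$ near a near-minimizer of $W$.

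For the $\Gamma$-liminf inequality, I take $u_\e\rightharpoonup u$ weakly in $H^1((0,1);\mathbb R^d)$; the one-dimensional Sobolev embedding gives uniform convergence $u_\e\to u$ on $[0,1]$. Weak lower semicontinuity of the kinetic term yields $\int_0^1 |u'|^2\,dt \le \liminf_\e \int_0^1 |u_\e'|^2\,dt$. For any compact $K\subset [0,1]\setminus A$ one has $|u|\ge c_K>0$ on $K$, hence $|u_\e|\ge c_K/2$ on $K$ for $\e$ small, and therefore $|u_\e(t)/\e|\to +\infty$ uniformly in $t\in K$. Since $W(y)\to 0$ as $|y|\to\infty$, this yields $W(u_\e/\e)\to 0$ uniformly on $K$, so $\int_K W(u_\e/\e)\,dt\to 0$. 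On the complement I use the pointwise bound $W\ge \inf W$, obtaining $\int_{[0,1]\setminus K} W(u_\e/\e)\,dt \ge \inf W\cdot |[0,1]\setminus K|$. Letting $K$ exhaust $[0,1]\setminus A$ through a sequence of finite unions of closed intervals, so that $|[0,1]\setminus K|\to |A|$, the superadditivity of $\liminf$ delivers the lower bound.

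For the $\Gamma$-limsup inequality, given $\delta>0$ I pick $x_\delta\in\mathbb R^d$ with $W(x_\delta)\le \inf W+\delta$ and define the recovery sequence $u_{\e,\delta}(t):=u(t)+\e\, x_\delta$. This converges uniformly to $u$ as $\e\to 0$ and has $u_{\e,\delta}'=u'$, so the kinetic part equals $\int_0^1 |u'|^2\,dt$ exactly. On $A$ the ratio $u_{\e,\delta}(t)/\e$ is identically $x_\delta$, contributing $W(x_\delta)|A|\le (\inf W+\delta)|A|$. On $[0,1]\setminus A$ the previous argument gives $|u_{\e,\delta}/\e|\to+\infty$ pointwise, and the decay of $W$ together with dominated convergence then forces the corresponding integral to $0$. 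Summing and letting $\delta=\delta(\e)\to 0$ through a diagonal extraction produces a genuine recovery sequence.

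The main obstacle I anticipate is the mild technicality of the two exhaustion and dominated convergence steps when $W$ is not globally bounded below; it is defused by the fact that on compact subsets of $[0,1]\setminus A$ the divergence of $u_{\e,\delta}(t)/\e$ is uniform, so $|W(u_{\e,\delta}/\e)|$ is dominated by the decay modulus of $W$ at infinity. The degenerate case $\inf W=-\infty$ combined with $|A|>0$ requires no separate argument since both sides of the claimed identity are then $-\infty$.
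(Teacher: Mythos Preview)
Your proof is correct and follows essentially the same strategy as the paper: the same recovery sequence $u+\e x_\delta$ for the upper bound, and a near-$A$/away-from-$A$ splitting for the lower bound (the paper phrases the latter via the sets $\{u\in\overline B_\delta\}$ in the target rather than via compact exhaustion of $[0,1]\setminus A$ in the domain, but the mechanism is the same). Two minor remarks: in the limsup step the dominated-convergence argument is unnecessary, since $W\le 0$ already gives $\int_{[0,1]\setminus A}W(u_{\e,\delta}/\e)\,dt\le 0$, which is exactly what the paper's comparison $W\le W_\delta=W(x_\delta)\chi_{\{x_\delta\}}$ encodes; and your aside on $\inf W=-\infty$ does not cover the case $|A|=0$, though the paper's proof likewise tacitly assumes $\inf W>-\infty$.
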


Hence, if $W$ is not identically $0$ we have a different limit than in the unperturbed case, regardless of other conditions on $W$.
In particular, this holds for 
$W=-c\chi_{\{0\}}$, where $\chi$ denotes the characteristic function and $c>0$. Note that for such $W$ we have
$$
G_\e (u)=\int_0^1 |u'(t)|^2\,dt -c\,|\{t: u(t)=0\}|=\int_0^1 |u'(t)|^2\,dt +\inf W\,|\{t: u(t)=0\}|
$$
for all $\e>0$.

\begin{proof} We only consider the case $\inf W<0$.
With fixed $\delta>0$, let $x_\delta\in\mathbb R^d$ be such that $W(x_\delta)<\inf W+\delta< 0$, and set 
$$
W_\delta=W(x_\delta)\chi_{\{x_s\}}.
$$
We then have 
\begin{eqnarray*}
G_\e (u)&\le& G^\delta_\e (u):=\int_0^1 \Big(|u'(t)|^2+ W_\delta\Big(\frac{u(t)}\e\Big)\Big)dt\\
&=&\int_0^1 |u'(t)|^2dt -| W(x_\delta)||\{u=\e x_\delta\}|.
\end{eqnarray*}

Now if $u\in H^1((0,1),\mathbb R^d)$ we consider $u^\delta_\e= u+\e x_\delta$, which converges to $u$, and is such that
$$G^\delta_\e (u^\delta_\e)=\int_0^1 |u'(t)|^2dt -| W(x_\delta)||\{u=0\}|.$$ 
Hence, 
\begin{eqnarray*}
\limsup_{\e\to 0} G_\e (u)&\le& \int_0^1 |u'(t)|^2dt -| W(x_\delta)||\{u=0\}|\\
&\le&  \int_0^1 |u'(t)|^2dt +(\inf W+\delta)|\{u=0\}|,
\end{eqnarray*}
and, by the arbitrariness of $\delta$, the upper bound.

\smallskip
Conversely, with fixed $\delta>0$ we can consider $R_\delta>0$ such that
$$
W\ge-\delta+(\inf W)\chi_{\overline B_{R_\delta}},
$$
so that, for $\e$ small enough so that $\e R_\delta<\delta$, we have
\begin{eqnarray*}
G_\e (u)&\ge&\int_0^1|u'(t)|^2\,dt+\inf W|\{t: u(t)\in\overline B_{R_\delta}\}| -\delta\\
&\ge& \int_0^1|u'(t)|^2\,dt+\inf W\,|\{t: u(t)\in \overline B_{\delta}\}| -\delta.
\end{eqnarray*}
Noting that $u\mapsto -|\{t: u(t)\in  \overline B_{\delta}\}|$ is lower semicontinuous with respect to the convergence in $L^1$, if $u_\e\wto u$ weakly in $H^1(0,1)$ then we have 
$$
\liminf_{\e\to 0} G_\e (u) \ge \liminf_{\e\to 0} \int_0^1|u_\e'(t)|^2\,dt+ \liminf_{\e\to 0}\inf W\,|\{t: u_\e(t)\in\overline B_{\delta}\}|-\delta
$$
$$
\ge \int_0^1|u'(t)|^2\,dt+\inf W |\{t: u(t)\in \overline B_{\delta}\}|-\delta.
$$
Letting $\delta\to 0$ we obtain the claim.
\end{proof}

 \bibliographystyle{abbrv}
\bibliography{Bra-DM-LB-2023}

\end{document}